
\documentclass[journal]{IEEEtran}
%

\usepackage{amsthm}
\newtheorem{theorem}{Theorem}
\newtheorem{lemma}{Lemma}
\theoremstyle{definition}
\newtheorem{definition}{Definition}
\newtheorem{corollary}{Corollary}

\usepackage{algorithm}
\usepackage{algorithmic}


%

%

%
\ifCLASSINFOpdf
\else
   \usepackage[dvips]{graphicx}
   \graphicspath{{../eps/}}
   \DeclareGraphicsExtensions{.eps}
\fi
\ifCLASSOPTIONcompsoc
 \usepackage[caption=false,font=normalsize,labelfont=sf,textfont=sf]{subfig}
\else
 \usepackage[caption=false,font=footnotesize]{subfig}
\fi
\hyphenation{op-tical net-works semi-conduc-tor}

\begin{document}
%
\title{Recovering Sparse Nonnegative Signals via Non-convex Fraction Function Penalty}
%
%
%

\author{Angang~Cui,
        Haiyang~Li,
        Meng~Wen,
        and~Jigen~Peng
\thanks{A. Cui and J. Peng are with the School of Mathematics and Statistics, Xi'an Jiaotong University, Xi'an, 710049, China.
e-mail: (cuiangang@163.com; jgpengxjtu@126.com).}
\thanks{H. Li and M. Wen are with the School of Science, Xi'an Polytechnic University, Xi'an, 710048, China. e-mail: (fplihaiyang@126.com; wen5495688@163.com).}
\thanks{Manuscript received, ; revised , .}}

%
%

\markboth{Journal of \LaTeX\ Class Files,~Vol.~, No.~, ~}%
{Shell \MakeLowercase{\textit{et al.}}: Bare Demo of IEEEtran.cls for IEEE Journals}
%



\maketitle

\begin{abstract}
Many real world practical problems can be formulated as $\ell_{0}$-minimization problems with nonnegativity constraints, which seek the sparsest nonnegative signals to underdetermined linear systems.
They have been widely applied in signal and image processing, machine learning, pattern recognition and computer vision. Unfortunately, this $\ell_{0}$-minimization problem with nonnegativity constraint
is computational and NP-hard because of the discrete and discontinuous nature of the $\ell_{0}$-norm. In this paper, we replace the $\ell_{0}$-norm with a non-convex
fraction function, and study the minimization problem of this non-convex fraction function in recovering the sparse nonnegative signals from an underdetermined linear system. Firstly, we discuss the
equivalence between $(P_{0}^{\geq})$ and $(FP_{a}^{\geq})$, and the equivalence between $(FP_{a}^{\geq})$ and $(FP_{a,\lambda}^{\geq})$. It is proved that the optimal solution of the problem $(P_{0}^{\geq})$
could be approximately obtained by solving the regularization problem $(FP_{a,\lambda}^{\geq})$ if some specific conditions satisfied. Secondly, we propose a nonnegative iterative thresholding algorithm to
solve the regularization problem $(FP_{a,\lambda}^{\geq})$ for all $a>0$. Finally, some numerical experiments on sparse nonnegative siganl recovery problems show that our method performs effective in
finding sparse nonnegative signals compared with the linear programming.
\end{abstract}

\begin{IEEEkeywords}
Compressed sensing, Sparse nonnegative signal, Non-convex fraction function, Equivalence, Nonnegative iterative thresholding algorithm.
\end{IEEEkeywords}

%
\IEEEpeerreviewmaketitle

\section{Introduction}\label{section1}
Many real world practical problems can be formulated as $\ell_{0}$-minimization problems with nonnegativity constraints, which seek the sparsest nonnegative signals to underdetermined linear systems.
They have been widely applied in signal and image processing (see, e.g., \cite{donoho9}, \cite{bar33}, \cite{bru34}, \cite{dono36}, \cite{kha37}, \cite{osg38}, \cite{moran39}, \cite{wang40}), machine
learning (see, e.g., \cite{bra41}, \cite{bra42}, \cite{he43}, \cite{mang44}, \cite{mang45}), pattern recognition and computer vision (see, e.g., \cite{moran39}, \cite{szlam46}), and so on. The
$\ell_{0}$-minimization problem with the nonnegativity constraint can be modeled into the following minimization
\begin{equation}\label{equ8}
(P_{0}^{\geq})\ \ \ \ \ \min_{x\in \Re^{n}}\|x\|_{0}\ \ \mathrm{subject}\ \mathrm{to}\ \ Ax=b,\ \ x\geq0
\end{equation}
where $A$ is a $m\times n$ real matrix of full row rank with $m\ll n$, $b$ is a nonzero real column vector of $m$-dimension, and $\|x\|_{0}$ is the so-called $\ell_0$-norm of real vector $x$, which counts the
number of the non-zero entries in $x$ (see, e.g., \cite{brucksteindm3}, \cite{elad4}, \cite{theodor5}). In general, the problem $(P_{0}^{\geq})$ is computational and NP-hard \cite{zhao49} because of the discrete
and discontinuous nature of the $\ell_{0}$-norm. A large amount of recent attention is attracted to the following minimization problem
\begin{equation}\label{equ9}
(P_{1}^{\geq})\ \ \ \ \ \min_{x\in \Re^{n}}\|x\|_{1}\ \ \mathrm{subject}\ \mathrm{to}\ \ Ax=b,\ \ x\geq0.
\end{equation}

The problem $(P_{1}^{\geq})$ has shown to be efficient for solving $(P_{0}^{\geq})$ in many situations (see, e.g., \cite{bruck47},\cite{dono48},\cite{zhao49},\cite{zhang50},\cite{kha51}), especially,
evidence in \cite{zhao49}, assuming the range space property (RSP) is adopted, the problem $(P_{1}^{\geq})$ can really make an exact recovery, and any linear programming solver can be used to
solve it. However, as the compact convex relaxation of the problem $(P_{0}^{\geq})$, the problem $(P_{1}^{\geq})$ may be suboptimal for recovering a real sparse signal.

Inspired by the good performances of the fraction function in image restoration and compressed sensing (see, e.g., \cite{geman31,li30}), in this paper, we replace the discontinuous $\ell_{0}$-norm $\|x\|_{0}$ with a continuous sparsity promoting penalty function
\begin{equation}\label{equ5}
P_{a}(x)=\sum_{i=1}^{n}\rho_{a}(x_{i}),\ \ \ a>0
\end{equation}
where
\begin{equation}\label{equ6}
\rho_{a}(t)=\frac{a|t|}{a|t|+1}
\end{equation}
is the fraction function, and it is increasing and concave in $t\in[0,+\infty]$.

\begin{figure}[h!]
 \centering
 \includegraphics[width=2.5in]{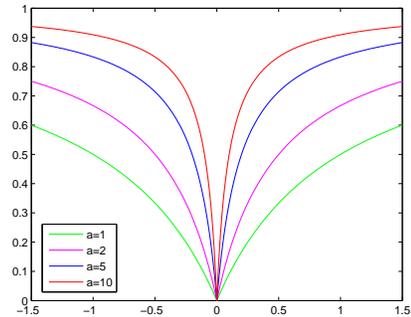}
\caption{The behavior of the fraction function $p_{a}(t)$ for various values of $a$.}
\label{fig:1}
\end{figure}

With the change of parameter $a>0$, the non-convex function $P_{a}(x)$ interpolates the $\ell_{0}$-norm
\begin{equation}\label{equ7}
\lim_{a\rightarrow+\infty}\rho_{a}(x_{i})=\left\{
    \begin{array}{ll}
      0, & {\ \ \mathrm{if} \ x_{i}=0;} \\
      1, & {\ \ \mathrm{if} \ x_{i}\neq 0.}
    \end{array}
  \right.
\end{equation}
Then, we translate problem $(P_{0}^{\geq})$ into the following minimization problem
\begin{equation}\label{equ10}
(FP_{a}^{\geq})\ \ \ \min_{x\in \Re^{n}}P_{a}(x)\ \ \mathrm{subject}\ \mathrm{to}\ \ Ax=b,\ \ x\geq0
\end{equation}
for the constrained form and
\begin{equation}\label{equ11}
(FP_{a,\lambda}^{\geq})\ \ \ \min_{x\geq0}\Big\{\|Ax-b\|_{2}^{2}+\lambda P_{a}(x)\Big\}
\end{equation}
for the regularization form.

The paper is organized as follows. In Section \ref{section2}, we establish the equivalences of $(P_{0}^{\geq})$, $(FP_{a}^{\geq})$ and $(FP_{a,\lambda}^{\geq})$. In section \ref{section3}, the nonnegative
iterative thresholding algorithm is proposed to solve the regularization problem $(FP_{a,\lambda}^{\geq})$ for all $a>0$. And the convergence of our algorithm is established in Section \ref{section4}. In
Section \ref{section5}, a series of experiments on some sparse nonnegative signal recovery problems are demonstrated. We conclude this paper in Section \ref{section6}.

\section{Equivalences of $(P_{0}^{\geq})$, $(FP_{a}^{\geq})$ and $(FP_{a,\lambda}^{\geq})$}\label{section2}

In this section, we first discus the equivalence between $(FP_{a}^{\geq})$ and $(P_{0}^{\geq})$, and then we study the equivalence between $(FP_{a,\lambda}^{\geq})$ and $(FP_{a}^{\geq})$.

\subsection{Equivalence between $(FP_{a}^{\geq})$ and $(P_{0}^{\geq})$}

Before our discussion, we give some notions and preliminary results that are used in later analysis.

\begin{definition}\label{de1}{\rm (\cite{luen53})}
Given the set of $m$ simultaneous linear equations in $n$ unknowns
\begin{equation}\label{equ12}
 Ax=b.
\end{equation}
Let $B$ be any nonsingular $m\times m$ sub-matrix made up of columns of $A$. Then, if all $n-m$ components of $x$ not associated with columns of $B$ are set equal to zero, the solution to the resulting
set of equations is said to be a basic solution to (\ref{equ12}) with respect to the basis $B$. The components of $x$ associated with columns of $B$ are called basic variables.
\end{definition}

\begin{definition}\label{de2}{\rm (\cite{chen54})}
Given a $m\times n$ matrix $A$ and a $m$-dimensional vector $b$, we define the linear problem is to find non-negative solution
$x\in \Re^{n}$ such that
\begin{equation}\label{equ13}
 Ax=b,\ \ x\geq0.
\end{equation}
We denote the problem by $\mathrm{LP}(A,b)$, its solution set by $\mathrm{SOL}(A,b)$ and its feasible set by $\mathrm{FEA}(A,b)=\{x|Ax=b, x\geq0\}$. A feasible solution to the
constraints (\ref{equ13}) that is also basic is said to be a basic feasible solution. The solution set $\mathrm{SOL}(A,b)$ often has an infinite
number of solutions when it is nonempty.
\end{definition}

\begin{definition}\label{de3}{\rm (\cite{luen53})}
If one or more of the basic variables in a basic solution has value zero, that solution is said to be a degenerate basic solution.
\end{definition}
\begin{definition}\label{de4}{\rm (\cite{luen53})}
A point $x$ in a convex set $\mathcal{C}$ is said to be an extreme point of $\mathcal{C}$ if there are no two distinct points $x_{1}$ and $x_{2}$ in $\mathcal{C}$ such that $x=\eta x_{1}+(1-\eta)x_{2}$ for some $\eta$, $0<\eta<1$.
\end{definition}

An extreme point is thus a point that does not lie strictly within a line segment connecting two other points of the set. The extreme
points of a triangle, for example, are its three vertices (see, e.g., \cite{luen53}).

\begin{theorem}\label{th1}{\rm (\cite{luen53})}
(Equivalence of extreme points and basic solutions) Let $A$ be an $m\times n$ matrix of rank $m$ and $b$ an $m$-dimension vector.
Let $\mathcal{D}$ be the convex polytope consisting of all $n$-dimension vectors $x$ satisfying (\ref{equ13}). Then, a vector $x$
is an extreme point of $\mathcal{D}$ if and only if $x$ is a basic feasible solution to (\ref{equ13}).
\end{theorem}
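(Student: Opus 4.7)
The plan is to prove the two implications separately, since Theorem \ref{th1} is an ``if and only if'' statement about an extreme point $x$ of the polytope $\mathcal{D}=\mathrm{FEA}(A,b)$ and a basic feasible solution of the system (\ref{equ13}).

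For the direction ``basic feasible $\Rightarrow$ extreme,'' I would start with a basic feasible solution $x$ associated with a nonsingular $m\times m$ submatrix $B$ whose columns are indexed by some set $I\subset\{1,\dots,n\}$, so that $x_i=0$ for all $i\notin I$. Suppose toward a contradiction that $x=\eta x_{1}+(1-\eta)x_{2}$ with distinct $x_{1},x_{2}\in\mathcal{D}$ and $0<\eta<1$. Since $x_{1},x_{2}\geq 0$ and the coefficients are strictly positive, every coordinate where $x_{i}=0$ forces $(x_{1})_{i}=(x_{2})_{i}=0$, so both $x_1$ and $x_2$ are also supported inside $I$. Restricting the equations $Ax_{1}=b$ and $Ax_{2}=b$ to the columns of $B$ gives two systems of the form $B\tilde{x}_{j}=b$; nonsingularity of $B$ forces $\tilde{x}_{1}=\tilde{x}_{2}$, hence $x_{1}=x_{2}$, a contradiction.

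For the converse ``extreme $\Rightarrow$ basic feasible,'' let $x$ be an extreme point of $\mathcal{D}$ and let $S=\{i:x_{i}>0\}$ be its support. The key step is to show that the columns $\{a_{i}\}_{i\in S}$ of $A$ are linearly independent. If they were dependent, there would exist a nonzero $y\in\Re^{n}$ supported on $S$ with $Ay=0$; then for sufficiently small $\epsilon>0$ both $x\pm\epsilon y$ remain nonnegative (because $x_i>0$ on the support of $y$) and satisfy $A(x\pm\epsilon y)=b$, so $x=\tfrac{1}{2}(x+\epsilon y)+\tfrac{1}{2}(x-\epsilon y)$ would be a strict convex combination of two distinct points in $\mathcal{D}$, contradicting extremality. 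Hence $|S|\leq m$ and the columns $\{a_i\}_{i\in S}$ are independent. Finally, because $A$ has rank $m$, I can extend this independent set by $m-|S|$ additional columns of $A$ to form a nonsingular $m\times m$ submatrix $B$; with respect to this basis, $x$ is exactly a (possibly degenerate) basic feasible solution in the sense of Definition \ref{de1}–\ref{de3}.

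The routine direction is the first one, which is essentially linear algebra. The only real obstacle lies in the converse, specifically the degenerate case $|S|<m$: one must verify that independent columns of $A$ can always be completed to an $m$-element basis, which uses the full-row-rank hypothesis on $A$, and then be careful that the resulting basic solution is allowed to have extra zero basic variables (this is precisely what Definition \ref{de3} accommodates). Once that bookkeeping is handled, the two implications combine to give the stated equivalence.
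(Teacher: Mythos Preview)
Your argument is correct and is exactly the standard textbook proof. Note, however, that the paper itself does not supply a proof of Theorem~\ref{th1}: it is quoted from \cite{luen53}, so there is no ``paper's own proof'' to compare against beyond the classical argument you have reproduced.
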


\begin{corollary}\label{co1}{\rm (\cite{luen53})}
If the convex set $\mathcal{D}$ corresponding to (\ref{equ13}) is nonempty, it has at least one extreme point.
\end{corollary}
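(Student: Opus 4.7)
The plan is to reduce the corollary to Theorem \ref{th1}: since that theorem identifies extreme points of $\mathcal{D}$ with basic feasible solutions of (\ref{equ13}), it suffices to exhibit a basic feasible solution whenever $\mathcal{D}\neq\emptyset$. I would therefore pick an arbitrary $x\in\mathcal{D}$ and refine it, via a finite descent on the number of positive entries, into a point whose support indexes a linearly independent set of columns of $A$; this refined point is a basic feasible solution, and hence (by Theorem \ref{th1}) an extreme point of $\mathcal{D}$.

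First I would fix any $x\in\mathcal{D}$, let $S=\{i:x_{i}>0\}$ and consider the sub-matrix $A_{S}$ of $A$ formed by the columns indexed by $S$. If the columns of $A_{S}$ are linearly independent, then $|S|\leq m$ and, using the assumption that $A$ has rank $m$, I can adjoin further columns of $A$ to obtain a nonsingular $m\times m$ sub-matrix $B$. With respect to this basis $B$, the components of $x$ outside $B$ are zero and $x$ is basic feasible (possibly degenerate), so by Theorem \ref{th1} it is an extreme point and we are done.

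Otherwise the columns of $A_{S}$ are linearly dependent, so there exists $y\in\Re^{n}$ with $\mathrm{supp}(y)\subseteq S$, $y\neq 0$ and $Ay=0$. For every scalar $t$ the vector $x(t)=x-ty$ satisfies $Ax(t)=b$, so feasibility for $\mathrm{LP}(A,b)$ is controlled solely by the nonnegativity constraint. Because $y_{i}=0$ for $i\notin S$ and $x_{i}>0$ for $i\in S$, the quantity
\begin{equation}
t^{\ast}=\min\Big\{\frac{x_{i}}{y_{i}}\ :\ i\in S,\ y_{i}>0\Big\}
\end{equation}
is well defined (after replacing $y$ by $-y$ if necessary so that at least one positive coordinate appears), finite and positive. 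At $t=t^{\ast}$ the point $x(t^{\ast})\geq 0$, lies in $\mathcal{D}$, and has at least one coordinate in $S$ newly equal to zero, so its support is strictly contained in $S$.

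Iterating this reduction strictly decreases the support size at each step, and since the support is a finite subset of $\{1,\dots,n\}$ the process must terminate with a feasible point whose support indexes linearly independent columns; the argument of the first case then produces an extreme point. The step I expect to need the most care is the choice of $t^{\ast}$: I must handle the sign of $y$ correctly so as to guarantee that $x(t^{\ast})$ remains nonnegative while forcing a previously positive coordinate to vanish, and I should verify that degeneracy of the intermediate basic solutions does not interfere with the appeal to Theorem \ref{th1}, which allows zero basic variables by Definition \ref{de3}.
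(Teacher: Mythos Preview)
The paper does not supply its own proof of this corollary; it is quoted directly from Luenberger and Ye \cite{luen53} as a standard fact of linear programming. Your argument is essentially the classical textbook proof---reduce the support of an arbitrary feasible point along a null-space direction until the active columns are linearly independent, then complete to a basis and invoke Theorem~\ref{th1}---and it is correct as written.
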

\begin{corollary}\label{co2}{\rm (\cite{luen53})}
If there is a finite optimal solution to a linear programming problem, there is a finite optimal solution which is an extreme point of the constraint set.
\end{corollary}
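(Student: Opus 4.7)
The plan is to combine Theorem~\ref{th1} and Corollary~\ref{co1} with the classical Minkowski--Weyl representation of a polyhedron. Write the linear programming problem as $\min\{c^{\top}x : Ax=b,\ x\geq 0\}$, assume its optimum is finite, let $x^{\ast}$ be any optimal solution, and let $\mathcal{D}$ denote the feasible set described in~(\ref{equ13}).

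First I would observe that $\mathcal{D}$ has only finitely many extreme points. By Theorem~\ref{th1} every extreme point corresponds to a basic feasible solution, and a basic solution is fixed by a choice of $m$ linearly independent columns of $A$, of which there are only finitely many. Corollary~\ref{co1} guarantees that at least one such point exists, so I can enumerate all extreme points as $v_{1},\dots,v_{k}$.

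Next I would invoke the representation theorem for polyhedra: every $x\in\mathcal{D}$ admits a decomposition
\[
x=\sum_{i=1}^{k}\alpha_{i} v_{i}+d,\qquad \alpha_{i}\geq 0,\ \sum_{i=1}^{k}\alpha_{i}=1,
\]
with $d$ lying in the recession cone $\{d\in\Re^{n}:Ad=0,\ d\geq 0\}$ of $\mathcal{D}$. Applying this to $x^{\ast}$ and using linearity of the objective yields
\[
c^{\top}x^{\ast}=\sum_{i=1}^{k}\alpha_{i}\,c^{\top}v_{i}+c^{\top}d.
\]
Finiteness of the optimum forbids $c^{\top}d<0$, because otherwise the ray $x^{\ast}+t d$, $t\geq 0$, is feasible and drives the objective to $-\infty$. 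Hence $c^{\top}x^{\ast}\geq \min_{1\leq i\leq k} c^{\top}v_{i}$, while the optimality of $x^{\ast}$ gives the reverse inequality $c^{\top}x^{\ast}\leq c^{\top}v_{i}$ for every feasible $v_{i}$. Choosing $i^{\ast}$ to attain $\min_{i} c^{\top}v_{i}$ therefore produces an extreme point $v_{i^{\ast}}$ that is itself a finite optimal solution, which is exactly the conclusion of the corollary.

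The main obstacle in this plan is the recession-cone contribution $d$: when $\mathcal{D}$ is unbounded one cannot express feasible points as pure convex combinations of extreme points, so one must separately exclude the possibility that the objective is strictly decreased along an unbounded direction. That is precisely where the hypothesis of a finite optimum is consumed; once $c^{\top}d\geq 0$ has been established the remainder of the argument is a short averaging and min-selection step, and no further hypothesis on $A$, $b$, or $c$ is needed.
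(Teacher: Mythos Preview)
The paper does not supply its own proof of Corollary~\ref{co2}; the result is quoted from \cite{luen53} and used as a black box in the surrounding argument. There is therefore nothing in the paper to compare your proposal against.

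Your argument is a correct and standard proof of this classical fact. The only place to be slightly more careful is in the sentence ``a basic solution is fixed by a choice of $m$ linearly independent columns of $A$'': this implicitly uses the full row rank assumption stated just before Lemma~\ref{le1}, so it is valid in the paper's setting but is worth flagging explicitly. The textbook proof in \cite{luen53} is organized somewhat differently---it typically starts from an arbitrary optimal $x^{\ast}$, observes that if $x^{\ast}$ is not basic one can find a nonzero direction $d$ in the null space of the active constraints with $c^{\top}d=0$ (by optimality), and then slides along $\pm d$ to reduce the number of positive components until a basic feasible solution is reached. Your route via the Minkowski--Weyl decomposition is more structural: it front-loads the representation theorem and then finishes with a one-line averaging argument. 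Both approaches consume the finiteness hypothesis in the same way (to rule out improving recession directions), and both are perfectly adequate here.
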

\begin{corollary}\label{co3}{\rm (\cite{luen53})}
The constraint set $\mathcal{D}$ corresponding to (\ref{equ13}) possesses at most a finite number of extreme points.
\end{corollary}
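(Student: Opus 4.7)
The plan is to reduce the problem to a combinatorial counting argument by invoking Theorem~\ref{th1}, which identifies extreme points of $\mathcal{D}$ with basic feasible solutions of (\ref{equ13}). Once this correspondence is in place, it suffices to bound the number of basic feasible solutions, and this bound will follow from the finite number of ways to select a basis from the columns of $A$.

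First, I would recall from Definition~\ref{de1} that every basic solution of (\ref{equ13}) is obtained by choosing a nonsingular $m\times m$ submatrix $B$ from the columns of $A$, setting the $n-m$ components of $x$ not associated with $B$ equal to zero, and solving the resulting square nonsingular linear system for the remaining components. Since $B$ is nonsingular, this system has a unique solution, so each admissible choice of basis determines \emph{at most one} basic solution, and hence at most one basic feasible solution (depending on whether the resulting vector is nonnegative).

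Next, I would count the admissible choices. The number of ways to select $m$ columns from the $n$ columns of $A$ is at most $\binom{n}{m}$, which is finite. Consequently, the total number of basic solutions is at most $\binom{n}{m}$, and the number of basic feasible solutions is bounded by the same quantity. Applying Theorem~\ref{th1}, the number of extreme points of $\mathcal{D}$ is at most $\binom{n}{m}$, which proves the claim.

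The main steps are conceptually straightforward once Theorem~\ref{th1} is available, so there is no genuine obstacle here; the only point requiring a line of care is to emphasize that each basis determines the basic solution uniquely (through the nonsingularity of $B$), so that the combinatorial bound $\binom{n}{m}$ on the choice of bases transfers directly to a bound on the set of basic feasible solutions and, via the equivalence, on the set of extreme points.
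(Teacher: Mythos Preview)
Your argument is correct and is exactly the standard proof of this fact: invoke Theorem~\ref{th1} to identify extreme points with basic feasible solutions, then bound the latter by $\binom{n}{m}$ via the uniqueness of the solution once a nonsingular basis $B$ is fixed. The paper itself does not supply a proof of Corollary~\ref{co3}; it is quoted directly from \cite{luen53}, where precisely this counting argument appears, so your proposal matches the intended approach.
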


Equipped above preliminary results, we shall establish the equivalence of the problems $(FP_{a}^{\geq})$ and $(P_{0}^{\geq})$.

By Definition \ref{de2}, the problems $(P_{0}^{\geq,1})$ and $(FP_{a}^{\geq,1})$ could be rewritten as
\begin{equation}\label{equ14}
(SOLP_{0}^{\geq})\ \ \ \min_{x\in \Re^{n}}\|x\|_{0}\ \ \mathrm{subject}\ \mathrm{to}\ \ x\in \mathrm{SOL}(A,b)
\end{equation}
and
\begin{equation}\label{equ15}
(SOLFP_{a}^{\geq})\ \ \ \min_{x\in \Re^{n}}P_{a}(x)\ \ \mathrm{subject}\ \mathrm{to}\ \ x\in \mathrm{SOL}(A,b).
\end{equation}
In particular, we call a solution of $(SOLFP_{a}^{\geq})$ a least fraction solution.

\textbf{Full rank assumption}: The $m\times n$ matrix $A$ has $m<n$, and the $m$ rows of $A$ are linearly independent. Otherwise, we make row
transformations simultaneously in both sides of the equation $Ax=b$, resulting in an equivalent equation $A_{1}x =\hat{b}$ with $A_{1}$ being
of full row rank.

\begin{lemma}\label{le1}
All least fraction solutions of the $\mathrm{LP}(A,b)$ are extreme points of $\mathrm{SOL}(A,b)$.
\end{lemma}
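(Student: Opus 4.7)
The plan is to argue by contradiction, exploiting the strict concavity of the fraction function $\rho_{a}$ on the nonnegative half-line. Suppose $x^{\ast}$ is a least fraction solution of $\mathrm{LP}(A,b)$ that is \emph{not} an extreme point of $\mathrm{SOL}(A,b)$. By Definition~\ref{de4}, there exist distinct points $x_{1},x_{2}\in \mathrm{SOL}(A,b)$ and a scalar $\eta\in(0,1)$ such that $x^{\ast}=\eta x_{1}+(1-\eta)x_{2}$.

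First I would verify that each summand $\rho_{a}$ of $P_{a}$ is strictly concave on $[0,+\infty)$. A direct computation gives $\rho_{a}'(t)=a/(at+1)^{2}$ and $\rho_{a}''(t)=-2a^{2}/(at+1)^{3}<0$ for all $t\geq 0$ and all $a>0$. Since $x_{1}\neq x_{2}$, there is some coordinate $i_{0}$ with $(x_{1})_{i_{0}}\neq (x_{2})_{i_{0}}$, and because $x_{1},x_{2}\geq 0$, strict concavity of $\rho_{a}$ on $[0,+\infty)$ gives
\[
\rho_{a}\bigl(\eta(x_{1})_{i_{0}}+(1-\eta)(x_{2})_{i_{0}}\bigr)>\eta\rho_{a}\bigl((x_{1})_{i_{0}}\bigr)+(1-\eta)\rho_{a}\bigl((x_{2})_{i_{0}}\bigr).
\]
Summing this strict inequality at coordinate $i_{0}$ with the (non-strict) concavity inequalities at the remaining coordinates yields
\[
P_{a}(x^{\ast})>\eta P_{a}(x_{1})+(1-\eta)P_{a}(x_{2}).
\]

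Next, since $x_{1},x_{2}\in \mathrm{SOL}(A,b)$ and $x^{\ast}$ minimizes $P_{a}$ over $\mathrm{SOL}(A,b)$, both $P_{a}(x_{1})\geq P_{a}(x^{\ast})$ and $P_{a}(x_{2})\geq P_{a}(x^{\ast})$, so $\eta P_{a}(x_{1})+(1-\eta)P_{a}(x_{2})\geq P_{a}(x^{\ast})$. Chaining this with the previous display produces $P_{a}(x^{\ast})>P_{a}(x^{\ast})$, the desired contradiction. Hence every least fraction solution must be an extreme point of $\mathrm{SOL}(A,b)$.

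I do not anticipate a serious obstacle: the only thing to be careful about is that strict concavity is used on the correct domain, namely $[0,+\infty)$, which is guaranteed by the nonnegativity constraint in $\mathrm{LP}(A,b)$, and that a single coordinate of strict difference between $x_{1}$ and $x_{2}$ suffices to upgrade the concavity of $P_{a}$ along the segment $[x_{1},x_{2}]$ to a strict inequality at $x^{\ast}$.
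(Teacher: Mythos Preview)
Your proof is correct and follows essentially the same approach as the paper's: both use the strict concavity of $\rho_{a}$ on $[0,\infty)$ together with the minimality of $x^{\ast}$ over $\mathrm{SOL}(A,b)$ to rule out any nontrivial convex-combination representation. The only differences are cosmetic---you frame it as a contradiction and explicitly isolate a coordinate $i_{0}$ where strictness enters, whereas the paper writes the chain of inequalities directly and then reads off $y=z=x^{\ast}$ from the equality case.
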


\begin{proof}
Let $x^{\ast}$ be a least fraction solution. Suppose there exist $y, z\in \mathrm{SOL}(A,b)$ such that
$x^{\ast}=\eta y+(1-\eta)z$ for some $0<\eta<1$. Recall that $\rho_{a}(t)$ is strictly concave for $t\geq0$. Then it follows
\begin{eqnarray*}
P_{a}(x^{\ast})&=&\sum_{i=1}^{n}\rho_{a}(x_{i}^{\ast})\\
&=&\sum_{i=1}^{n}\rho_{a}(\eta y_{i}+(1-\eta)z_{i})\\
&\geq&\eta\sum_{i=1}^{n}\rho_{a}(y_{i})+(1-\eta)\sum_{i=1}^{n}\rho_{a}(z_{i})\\
&=&\eta P_{a}(y_{i})+(1-\eta)P_{a}(z_{i})\\
&\geq&P_{a}(x^{\ast})
\end{eqnarray*}
where the last inequality uses that $x^{\ast}$ is a least fraction solution. Furthermore, the above equalities hold if and only if
$y=z=x^{\ast}$, which indicates that $x^{\ast}$ is an extreme point of $\mathrm{SOL}(A,b)$.
\end{proof}

By Lemma \ref{le1}, $x^{*}$ is a extreme point of the polytope set $\mathcal{D}$. We denote by $E(\mathcal{D})$ the set of extreme points of the polytope
set $\mathcal{D}$, and define two constants $r(A,b)$ and $R(A,b)$ as follows

\begin{equation}\label{equ16}
r(A,b)=\min_{z\in E(\mathcal{D}),z_i>0, 1\leq i\leq n}z_{i}.
\end{equation}
\begin{equation}\label{equ17}
R(A,b)=\max_{z\in E(\mathcal{D}),z_i>0, 1\leq i\leq n}z_{i}.
\end{equation}
Clearly, the defined constant $r(A,b)$ and $R(A,b)$ are finite and positive due to the finiteness of $E(\mathcal{D})$ and positive of $z_{i}$.\\

\begin{theorem}\label{th2}
There exists some constants $\hat{a}>0$ such that the optimal solution to the problem $(FP_{\hat{a}}^{\geq})$ also solves $(P_{0}^{\geq})$.
\end{theorem}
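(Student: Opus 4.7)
The plan is to exploit two facts in tandem. First, by Lemma \ref{le1}, any optimizer of $(FP_a^{\geq})$ lies at an extreme point of $\mathrm{SOL}(A,b)$, so all its positive coordinates are pinched into the interval $[r(A,b),R(A,b)]$. Second, $\rho_a(t)\to 1$ as $a\to\infty$ for every fixed $t>0$, so on extreme points the penalty $P_a$ becomes an arbitrarily good proxy for $\|\cdot\|_0$. Combining these will let me separate the values of $P_a$ attained at extreme points of different sparsities and thereby force the minimizer of $(FP_a^{\geq})$ to carry the minimum possible support.

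First I would fix an optimal solution $x_0$ of $(P_0^{\geq})$ that is \emph{itself} an extreme point of $\mathrm{FEA}(A,b)$. This is legitimate: any optimizer has support size $k_0:=\|x_0\|_0\leq m$ with linearly independent active columns (otherwise the support could be reduced, contradicting optimality of $k_0$); extending those columns to a basis of $\Re^m$ exhibits $x_0$ as a (possibly degenerate) basic feasible solution, hence an extreme point by Theorem \ref{th1}. Consequently the upper bound $R(A,b)$ in the definition above applies coordinate-wise to $x_0$.

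Next, let $x^{\ast}$ denote any minimizer of $(FP_a^{\geq})$. Lemma \ref{le1} makes $x^{\ast}$ an extreme point of $\mathrm{SOL}(A,b)$, so every positive entry of $x^{\ast}$ lies in $[r(A,b),R(A,b)]$. Using monotonicity of $\rho_a$ on $[0,\infty)$ together with the optimality of $x^{\ast}$ for $(FP_a^{\geq})$ at $x_0$, I obtain the chain
\begin{equation*}
\|x^{\ast}\|_{0}\cdot\frac{a\,r(A,b)}{a\,r(A,b)+1}\;\leq\; P_a(x^{\ast})\;\leq\;P_a(x_0)\;\leq\;\|x_0\|_{0}\cdot\frac{a\,R(A,b)}{a\,R(A,b)+1}.
\end{equation*}

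Finally I would argue by contradiction. Suppose $\|x^{\ast}\|_0\geq k_0+1$; writing $r=r(A,b)$ and $R=R(A,b)$, the chain collapses to $(k_0+1)\,\tfrac{ar}{ar+1}\leq k_0\,\tfrac{aR}{aR+1}$. As $a\to\infty$ the left-hand side tends to $k_0+1$ while the right-hand side tends to $k_0$, so the inequality must fail for all $a$ exceeding some explicit threshold $\hat a$ depending only on $k_0$, $r$, $R$ (a short cross-multiplication gives a closed-form sufficient bound). For $a\geq\hat a$ this forces $\|x^{\ast}\|_0\leq k_0$, and minimality of $k_0$ upgrades this to $\|x^{\ast}\|_0=k_0$, so $x^{\ast}$ solves $(P_0^{\geq})$. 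The only real pitfall is the step of selecting an extreme-point optimizer of the non-linear-program $(P_0^{\geq})$ so that $R(A,b)$ applies to $x_0$; once this is handled, the remainder is a clean asymptotic comparison driven by $\rho_a(t)\nearrow 1$.
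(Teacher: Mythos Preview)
Your argument is correct, but it follows a genuinely different route from the paper's proof of Theorem~\ref{th2}. The paper argues by pigeonhole: take any sequence $a_i\to\infty$; by Lemma~\ref{le1} each minimizer of $(FP_{a_i}^{\geq})$ is an extreme point, and since $E(\mathcal{D})$ is finite, a single extreme point $\hat{x}$ is optimal along an infinite subsequence $a_{i_k}$. Passing to the limit in $P_{a_{i_k}}(\hat{x})\leq\|x\|_0$ (valid for every feasible $x$) yields $\|\hat{x}\|_0\leq\|x\|_0$, so $\hat{x}$ solves $(P_0^{\geq})$ and one may take $\hat{a}=a_{i_1}$. This is a pure compactness argument and is non-constructive.

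Your approach is instead a direct quantitative comparison and in fact proves the stronger Theorem~\ref{th3}: a uniform threshold above which \emph{every} minimizer of $(FP_a^{\geq})$ solves $(P_0^{\geq})$. One simplification worth noting: the constant $R(A,b)$ and the accompanying step of forcing $x_0$ to be an extreme point are unnecessary. Since $\rho_a(t)<1$ for all $t$, one has $P_a(x_0)\leq\|x_0\|_0=k_0$ for \emph{any} feasible $x_0$, which already suffices for the chain
\[
\|x^{\ast}\|_0\cdot\frac{ar}{ar+1}\;\leq\;P_a(x^{\ast})\;\leq\;P_a(x_0)\;\leq\;k_0.
\]
This is exactly the shortcut the paper takes in its own proof of Theorem~\ref{th3}, and it removes what you flagged as the ``only real pitfall.'' Your version with $R(A,b)$ still works and yields an explicit threshold; the paper's pigeonhole proof of Theorem~\ref{th2} is shorter but gives no quantitative information.
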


\begin{proof}
Let $\{a_{i}|i=0, 1,2,\cdots\}$ be a increasing infinite sequence with $\lim_{i\rightarrow\infty}a_{i}=\infty$ and $a_{0}=1$. For each $a_{i}$, by Lemma \ref{le1}, the optimal solution $\hat{x}_{i}$ to
the problem $(SOLFP_{a_{i}}^{\geq})$ is an extreme point of the polytope set $\mathcal{D}$. Since the polytope set $\mathcal{D}$ has a finite number of extreme points (see Theorem \ref{th1},
Corollary \ref{co1}, \ref{co2}, \ref{co3}), one extreme point, named $\hat{x}$, will repeatedly solves the problem $(SOLFP_{a_{i}}^{\geq})$ for some subsequence $\{a_{i_{k}}\mid k=1,2,\cdots\}$ of $\{a_i\}$. For
any $a_{i_{k}}\geq a_{i_{1}}$ and $x\in\mathcal{R}^n$, we have
$$P_{a_{i_{k}}}(x_{i_{k}})=\min P_{a_{i_{k}}}(x)\leq\|x\|_{0}.$$
Letting $i_{k}\rightarrow\infty$, we have
$$\|\hat{x}\|_{0}\leq \|x\|_{0}.$$
Hence $\hat{x}$ is the optimal solution to the problem $(SOLP_{0}^{\geq})$. This proves that there exists some constant $\hat{a}>0$ such that the optimal solution to
the problem $(FP_{\hat{a}}^{\geq})$ also solves $(P_{0}^{\geq})$.
\end{proof}

Furthermore, we have:
\begin{theorem}\label{th3}
There exists a constant $a^{*}>0$ such that, whenever $a>a^{*}$, every optimal solution to the problem $(FP_{a}^{\geq})$ also solves
$(P_{0}^{\geq})$, where $a^{*}$ depends on $A$ and $b$.
\end{theorem}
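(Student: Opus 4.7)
The plan is to strengthen Theorem \ref{th2}'s existence result into a uniform statement by directly comparing $P_a$ at a fixed $\ell_0$-minimizer $x^*$ with $P_a$ at any extreme point of $\mathcal{D}$ strictly less sparse than $x^*$. Since Lemma \ref{le1} forces every optimizer of $(FP_a^\geq)$ to be an extreme point, it suffices to show that once $a$ exceeds a threshold $a^*$ depending only on $A$ and $b$, no extreme point $\bar{x}$ with $\|\bar{x}\|_0 > \|x^*\|_0$ can beat $x^*$ in the value of $P_a$.

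First, I would fix some $x^*\in\mathrm{SOL}(A,b)$ of minimum support size $s:=\|x^*\|_0$. Because $\rho_a(t)<1$ for every finite $t>0$, one gets the $a$-independent strict upper bound $P_a(x^*)<s$. Next, for any extreme point $\bar{x}\in E(\mathcal{D})$ with $\|\bar{x}\|_0=k$, the definition of $r(A,b)$ in (\ref{equ16}) together with the monotonicity of $\rho_a$ on $[0,\infty)$ yields
$$P_a(\bar{x})\;\geq\;k\cdot\rho_a(r(A,b))\;=\;k\cdot\frac{a\,r(A,b)}{a\,r(A,b)+1}.$$
If $k\geq s+1$ this is at least $(s+1)\cdot\frac{a\,r(A,b)}{a\,r(A,b)+1}$, which strictly exceeds $s$ precisely when $a\,r(A,b)>s$. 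So setting $a^*:=s/r(A,b)$, for every $a>a^*$ one obtains $P_a(\bar{x})>s>P_a(x^*)$, and $\bar{x}$ cannot be optimal for $(FP_a^\geq)$. Combining this with Lemma \ref{le1} forces every optimal $\bar{x}$ of $(FP_a^\geq)$ to satisfy $\|\bar{x}\|_0\leq s$; feasibility and minimality of $s$ then give $\|\bar{x}\|_0=s$, i.e.\ $\bar{x}$ also solves $(P_0^\geq)$.

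The main obstacle — and the whole reason a \emph{uniform} threshold exists — is the extreme-point lower bound $z_i\geq r(A,b)>0$ for nonzero coordinates. This is what turns the merely pointwise limit $\rho_a(t)\to 1$ into an estimate that applies simultaneously to every relevant extreme point, and it is why $a^*$ depends only on $A$ and $b$ (through $r(A,b)$ and the intrinsic sparsity $s$ of $(P_0^\geq)$). The finiteness of $E(\mathcal{D})$ noted in Corollary \ref{co3}, already exploited in Theorem \ref{th2}, guarantees $r(A,b)$ is attained and strictly positive. Everything else is a straightforward contradiction argument; in particular the comparison never needs $x^*$ itself to be an extreme point, only that it is some feasible point of sparsity $s$.
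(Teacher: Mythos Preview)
Your proposal is correct and follows essentially the same route as the paper: both arguments use Lemma~\ref{le1} to reduce to extreme points, bound $P_a$ from above by $\|x\|_0$ at a sparsest feasible point, bound $P_a$ from below at any extreme point via the uniform coordinate bound $r(A,b)$, and then exploit the integrality of $\|\cdot\|_0$ to obtain the identical threshold $a^{*}=\dfrac{\min_{x\in\mathrm{SOL}(A,b)}\|x\|_0}{r(A,b)}$. The only cosmetic difference is that you phrase the conclusion contrapositively (an extreme point with larger support cannot be optimal) whereas the paper directly bounds $\|x^{*}\|_0\le\bigl(1+\tfrac{1}{a\,r(A,b)}\bigr)\|x^{0}\|_0$.
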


\begin{proof}
Let $x^{*}$ be the optimal solution to the problem $(SOLFP_{a}^{\geq})$ and $x^{0}$ be the optimal solution to the problem $(SOLP_{0}^{\geq})$.
By Lemma \ref{le1} we know that $x^{*}$ is an extreme point of the polytope set $\mathcal{D}$.

Therefore, we have
\begin{eqnarray*}
\min_{x\in \mathrm{SOL}(A,b)}\|x\|_{0}&=&\|x^{0}\|_{0}\\
&\geq&\sum_{i\in \mathrm{supp}(x^{0})}\frac{a|x_{i}^{0}|}{1+a|x_{i}^{0}|}\\
&\geq&\sum_{i\in \mathrm{supp}(x^{\ast})}\frac{a|x_{i}^{\ast}|}{1+a|x_{i}^{\ast}|}\\
&\geq&\|x^{\ast}\|_{0}\frac{a|x_{i}^{\ast}|}{1+a|x_{i}^{\ast}|}
\end{eqnarray*}
which implies that
$$\|x^{*}\|_{0}\leq (1+\frac{1}{ar})\|x^{0}\|_{0}=(1+\frac{1}{ar})\min_{x\in \mathrm{SOL}(A,b)}\|x\|_{0}.$$
Because
$\|x^{*}\|_{0}$ is an integer number, from the inequality above, it follows that  $\|x^{*}\|_{0}=\displaystyle
\min_{x\in \mathrm{SOL}(A,b)}\|x\|_{0}$ (that is, $x^{*}$ solves $(SOLP_{0}^{\geq})$) when
\begin{equation}\label{equ18}
(1+\frac{1}{ar(A,b)})\min_{x\in \mathrm{SOL}(A,b)}\|x\|_{0}<\min_{x\in \mathrm{SOL}(A,b)}\|x\|_{0}+1
\end{equation}
Obviously, the inequality (\ref{equ18}) is true whenever
\begin{equation}\label{equ19}
a>\frac{\displaystyle\min_{x\in \mathrm{SOL}(A,b)}\|x\|_{0}}{r(A,b)}.
\end{equation}
Therefore, with $a^{*}$ denoting the right side of the inequality (\ref{equ19}), we conclude that when $a>a^{*}$, every solution $x^{*}$ to
the problem $(SOLFP_{a}^{\geq})$ also solves $(SOLP_{0}^{\geq})$. This proves that whenever $a>a^{*}$, every solution to the problem
$(FP_{a}^{\geq})$ also solves $(P_{0}^{\geq})$.
\end{proof}

\subsection{Equivalence between $(FP_{a,\lambda}^{\geq})$ and $(FP_{a}^{\geq})$}

In this subsection, we study the equivalence of the regularization problem $(FP_{a,\lambda}^{\geq})$ and the constrained problem $(FP_{a}^{\geq})$.

\begin{theorem}\label{th4}
Let $\{\lambda_{\tilde{n}}\}$ be a decreasing sequence of
positive numbers with $\lambda_{\tilde{n}}\rightarrow 0$, and $x_{\lambda_{\tilde{n}}}$ be a global minimizer of the problem $(FP_{a,\lambda}^{\geq})$ with
$\lambda=\lambda_{\tilde{n}}$. If the problem $(FP_{a}^{\geq})$  is feasible, then the sequence $\{x_{\lambda_{\tilde{n}}}\}$ is bounded and any of
its accumulation points is a global minimizer of the problem $(FP_{a}^{\geq})$.
\end{theorem}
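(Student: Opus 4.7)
The plan is to use the standard regularization-to-constraint passage argument from inverse problem theory. I would fix any feasible point $\tilde{x}$ for $(FP_a^{\geq})$ — such a point exists by the feasibility assumption — and exploit the global optimality of $x_{\lambda_{\tilde{n}}}$ in $(FP_{a,\lambda_{\tilde{n}}}^{\geq})$ to obtain the basic two-sided bound
$$\|Ax_{\lambda_{\tilde{n}}}-b\|_{2}^{2}+\lambda_{\tilde{n}}P_{a}(x_{\lambda_{\tilde{n}}})\leq\|A\tilde{x}-b\|_{2}^{2}+\lambda_{\tilde{n}}P_{a}(\tilde{x})=\lambda_{\tilde{n}}P_{a}(\tilde{x}).$$
Dropping each nonnegative term on the left in turn yields the two facts that power the whole argument: $\|Ax_{\lambda_{\tilde{n}}}-b\|_{2}^{2}\leq\lambda_{\tilde{n}}P_{a}(\tilde{x})\to0$, and $P_{a}(x_{\lambda_{\tilde{n}}})\leq P_{a}(\tilde{x})$ for every index $\tilde{n}$.

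The main obstacle is boundedness of $\{x_{\lambda_{\tilde{n}}}\}$, because $P_{a}$ is itself uniformly bounded above by $n$ and therefore cannot serve as a coercive penalty; the residual control $\|Ax_{\lambda_{\tilde{n}}}-b\|_{2}\to 0$ does not bound $x_{\lambda_{\tilde{n}}}$ either, since $\ker A$ may intersect $\mathbb{R}^{n}_{+}$ nontrivially. I would handle this by contradiction. Assume a subsequence satisfies $\|x_{\lambda_{\tilde{n}_{k}}}\|_{2}\to\infty$, normalize $u_{k}:=x_{\lambda_{\tilde{n}_{k}}}/\|x_{\lambda_{\tilde{n}_{k}}}\|_{2}$, and pass to a further convergent subsequence with limit $u$. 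Then $\|u\|_{2}=1$, $u\geq 0$, and $Au_{k}=Ax_{\lambda_{\tilde{n}_{k}}}/\|x_{\lambda_{\tilde{n}_{k}}}\|_{2}\to 0$, so $Au=0$. Pick any index $i_{0}$ with $u_{i_{0}}>0$; for large $k$ the component $x_{\lambda_{\tilde{n}_{k}},i}$ tends to $+\infty$ whenever $u_{i}>0$, so for any fixed $t>0$ the perturbation $y:=x_{\lambda_{\tilde{n}_{k}}}-tu$ is still nonnegative. Because $Au=0$, the residual $\|Ay-b\|_{2}^{2}$ is unchanged, while strict monotonicity of $\rho_{a}$ on $[0,\infty)$ forces $P_{a}(y)<P_{a}(x_{\lambda_{\tilde{n}_{k}}})$. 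Hence the $(FP_{a,\lambda_{\tilde{n}_{k}}}^{\geq})$-objective strictly decreases, contradicting the optimality of $x_{\lambda_{\tilde{n}_{k}}}$. This delivers boundedness.

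Once boundedness is in hand the remainder is routine. For any accumulation point $x^{\dagger}$ of $\{x_{\lambda_{\tilde{n}}}\}$, extract a subsequence $x_{\lambda_{\tilde{n}_{k}}}\to x^{\dagger}$; continuity of $x\mapsto Ax$ together with $\|Ax_{\lambda_{\tilde{n}_{k}}}-b\|_{2}\to 0$ gives $Ax^{\dagger}=b$, and $x^{\dagger}\geq 0$ as the componentwise limit of nonnegative vectors, so $x^{\dagger}$ is feasible for $(FP_{a}^{\geq})$. Finally, continuity of $P_{a}$ and the standing inequality $P_{a}(x_{\lambda_{\tilde{n}_{k}}})\leq P_{a}(\tilde{x})$ pass to the limit to give $P_{a}(x^{\dagger})\leq P_{a}(\tilde{x})$ for every feasible $\tilde{x}$, so $x^{\dagger}$ is a global minimizer of $(FP_{a}^{\geq})$.
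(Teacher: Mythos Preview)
Your argument is correct and shares the same overall skeleton as the paper's proof: fix a feasible $\tilde{x}$, derive the comparison inequality
\[
\|Ax_{\lambda_{\tilde{n}}}-b\|_{2}^{2}+\lambda_{\tilde{n}}P_{a}(x_{\lambda_{\tilde{n}}})\leq\lambda_{\tilde{n}}P_{a}(\tilde{x}),
\]
read off $\|Ax_{\lambda_{\tilde{n}}}-b\|_{2}\to 0$ and $P_{a}(x_{\lambda_{\tilde{n}}})\leq P_{a}(\tilde{x})$, and pass to the limit by continuity.

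The genuine difference is in how boundedness of $\{x_{\lambda_{\tilde{n}}}\}$ is obtained. The paper asserts that the objective $\|Ax-b\|_{2}^{2}+\lambda_{\tilde{n}}P_{a}(x)$ is coercive and then claims that boundedness of $\{P_{a}(x_{\lambda_{\tilde{n}}})\}$ already yields an accumulation point of $\{x_{\lambda_{\tilde{n}}}\}$. You correctly observe that $P_{a}\leq n$ everywhere, so neither coercivity nor that implication holds in general once $\ker A$ meets $\mathbb{R}^{n}_{+}$ nontrivially (which the standing hypothesis $m<n$ certainly allows). Your contradiction argument---normalizing an unbounded subsequence to extract a nonnegative unit vector $u\in\ker A$, then sliding the minimizer back by $tu$ to strictly decrease $P_{a}$ while leaving the residual unchanged---fills precisely this gap. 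What the paper's route buys is brevity; what yours buys is a proof that is actually complete without extra assumptions on $A$.
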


\begin{proof}
By
$$\lambda_{\tilde{n}} P_{a}(x)\leq\|Ax-b\|_{2}^{2}+\lambda_{\tilde{n}} P_{a}(x),$$
we can see that the objective function in the problem $(FP_{a,\lambda}^{\geq})$ with $\lambda=\lambda_{\tilde{n}}$
is bounded from below and is coercive, i.e.,
$$\|Ax-b\|_{2}^{2}+\lambda_{\tilde{n}} P_{a}(x)\rightarrow+\infty\ \ \mathrm{as}\ \ \|x\|_{2}\rightarrow+\infty,$$
and hence the set of global minimizers of $(FP_{a,\lambda}^{\geq})$ with $\lambda=\lambda_{\tilde{n}}$ is nonempty and bounded.

By assumption, we suppose that the problem $(FP_{a}^{\geq})$ is feasible and $\bar{x}$ is any feasible point, then
$$A\bar{x}=b.$$
Since $\{x_{\lambda_{\tilde{n}}}\}$ is a global minimizer of the problem $(FP_{a,\lambda}^{\geq})$ with $\lambda=\lambda_{\tilde{n}}$, we have

\begin{equation}\label{equ20}
\begin{array}{llll}
\lambda_{\tilde{n}}P_{a}(x_{\lambda_{\tilde{n}}})&\leq&\|Ax_{\lambda_{\tilde{n}}}-b\|_{2}^{2}+\lambda_{\tilde{n}}P_{a}(x_{\lambda_{\tilde{n}}})\\
&\leq&\|A\bar{x}-b\|_{2}^{2}+\lambda_{\tilde{n}} P_{a}(\bar{x})\\
&=&\lambda_{\tilde{n}} P_{a}(\bar{x}).
\end{array}
\end{equation}
Hence, the sequence $\{P_{a}(x_{\lambda_{\tilde{n}}})\}_{\tilde{n}\in N^{+}}$ is bounded, and the sequence $\{x_{\lambda_{\tilde{n}}}\}$ has at least one
accumulation point. In addition, by inequality (\ref{equ20}), we can get that
$$\|Ax_{\lambda_{\tilde{n}}}-b\|_{2}^{2}\leq\lambda_{\tilde{n}} P_{a}(\bar{x})\ \ \mathrm{for}\ \mathrm{any}\ \ \lambda_{\tilde{n}}\rightarrow 0.$$
If we set $x^{\ast}$ be any accumulation point of the sequence $\{x_{\lambda_{\tilde{n}}}\}$, we can derive that
$$Ax^{\ast}=b.$$
That is, $x^{\ast}$ is a feasible point of the problem $(FP_{a}^{\geq})$. Combined with $P_{a}(x^{\ast})\leq P_{a}(\bar{x})$ and the arbitrariness of $\bar{x}$,
we can get that $x^{\ast}$ is a global minimizer of $(FP_{a}^{\geq})$.
\end{proof}

Theorem \ref{th2} and \ref{th3} demonstrate that the optimal solution to the problem $(P_{0}^{\geq})$ can be exactly obtained by solving $(FP_{a}^{\geq})$ if some specific
conditions satisfied. Theorem \ref{th4} displays that the optimal solution to the problem $(FP_{a}^{\geq})$ can be approximately obtained by solving $(FP_{a,\lambda}^{\geq})$ for
some proper smaller $\lambda>0$.

\section{Nonnegative iterative thresholding (NIT) algorithm  for solving $(FP_{a,\lambda}^{\geq})$}\label{alg}\label{section3}

In this section, the nonnegative iterative thresholding (NIT) algorithm is proposed to solve the problem $(FP_{a,\lambda}^{\geq})$
for all $a\geq0$. Before we introduce the NIT algorithm, there are some results need to be prepared.

\subsection{Export the NIT algorithm}

\begin{lemma}\label{le2}
Define three threshold values
$$t_{1}^{\ast}=\frac{\sqrt[3]{\frac{27}{8}\lambda a^{2}}-1}{a}, \ \ \ \ t_{2}^{\ast}=\frac{\lambda}{2}a, \ \ \ \ t_{3}^{\ast}=\sqrt{\lambda}-\frac{1}{2a}$$
for any positive parameters $\lambda$ and $a$, then the inequalities $t_{1}^{\ast}\leq t_{3}^{\ast}\leq t_{2}^{\ast}$ hold. Furthermore,
they are equal to $\frac{1}{2a}$ when $\lambda=\frac{1}{a^{2}}$.
\end{lemma}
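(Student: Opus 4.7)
The plan is to verify the two inequalities separately by direct algebraic manipulation, reducing each to a clearly nonnegative expression (ideally a perfect square times a positive factor), so that the equality case drops out automatically.

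First I would handle the easier inequality $t_{3}^{\ast}\leq t_{2}^{\ast}$. Multiplying through by $2a>0$, the inequality $\sqrt{\lambda}-\frac{1}{2a}\leq\frac{\lambda a}{2}$ is equivalent to $2a\sqrt{\lambda}\leq\lambda a^{2}+1$, i.e.\ $(a\sqrt{\lambda}-1)^{2}\geq 0$, which is trivially true, with equality exactly when $a\sqrt{\lambda}=1$, i.e.\ $\lambda=1/a^{2}$.

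For $t_{1}^{\ast}\leq t_{3}^{\ast}$, the clean move is the substitution $u:=a\sqrt{\lambda}$, so that $\frac{27}{8}\lambda a^{2}=\frac{27}{8}u^{2}$ and $\sqrt[3]{\frac{27}{8}\lambda a^{2}}=\frac{3}{2}u^{2/3}$. Multiplying by $a$ and rearranging, the inequality reduces to $3u^{2/3}\leq 2u+1$. Now I would set $v:=u^{1/3}>0$, turning this into $2v^{3}-3v^{2}+1\geq 0$. Since $v=1$ is an obvious root of the cubic, polynomial division factors it as
\begin{equation*}
2v^{3}-3v^{2}+1=(v-1)^{2}(2v+1),
\end{equation*}
which is nonnegative for all $v>0$ and vanishes precisely at $v=1$, i.e.\ $u=1$, again corresponding to $\lambda=1/a^{2}$.

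Finally, at $\lambda=1/a^{2}$ I would plug in directly: $t_{1}^{\ast}=(\sqrt[3]{27/8}-1)/a=(3/2-1)/a=1/(2a)$, $t_{2}^{\ast}=(1/a)/2=1/(2a)$, and $t_{3}^{\ast}=1/a-1/(2a)=1/(2a)$, confirming the common value. There is no real obstacle here; the only mildly tricky step is recognizing the factorization $2v^{3}-3v^{2}+1=(v-1)^{2}(2v+1)$, which is exactly what guarantees the inequality holds with equality only in the advertised case. Everything else is routine algebra.
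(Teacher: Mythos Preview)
Your proof is correct and complete. The substitution $u=a\sqrt{\lambda}$ and then $v=u^{1/3}$ is the natural way to homogenize, and the factorization $2v^{3}-3v^{2}+1=(v-1)^{2}(2v+1)$ both establishes the inequality $t_{1}^{\ast}\leq t_{3}^{\ast}$ and pins down the unique equality case $v=1$, i.e.\ $\lambda=1/a^{2}$. The perfect-square argument for $t_{3}^{\ast}\leq t_{2}^{\ast}$ is likewise clean.

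As for comparison with the paper: the paper states this lemma without proof (it is treated as an elementary preliminary to Theorem~\ref{th5}, whose proof is in turn deferred to the reference \cite{li30}). So there is no in-paper argument to compare against; your proposal simply supplies what the paper omits, and does so by the most direct route.
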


\begin{theorem}\label{th5}
Given any vector $v\in \Re^{n}$, the thresholding operator $\mathcal{T}_{a,\lambda}: \Re^{n}\rightarrow \Re^{n}$ defined by
$$\mathcal{T}_{a,\lambda}(v):=\arg\min_{x\in \Re^{n}}\Big\{\|x-v\|_{2}^{2}+\lambda P_{a}(x)\Big\},$$
can be expressed as
\begin{equation}\label{equ21}
\mathcal{T}_{a,\lambda}(v_{i})=\left\{
    \begin{array}{ll}
      g_{a,\lambda}(v_{i}), & \ \ \mathrm{if} \ {|v_{i}|> t^{\ast};} \\
      0, & \ \ \mathrm{if} \ {|v_{i}|\leq t^{\ast}.}
    \end{array}
  \right.
\end{equation}
where $g_{a,\lambda}(v_{i})$ is defined as
\begin{equation}\label{equ22}
g_{a,\lambda}(v_{i})=\mathrm{sgn}(v_{i})(\frac{\frac{1+a|v_{i}|}{3}(1+2\cos(\frac{\phi(|v_{i}|)}{3}-\frac{\pi}{3}))-1}{a}),
\end{equation}
$$\phi(t)=\arccos(\frac{27\lambda a^{2}}{4(1+a|t|)^{3}}-1)$$
and the threshold function satisfies
\begin{equation}\label{equ23}
t^{\ast}=\left\{
    \begin{array}{ll}
     t_{2}^{\ast}=\frac{\lambda}{2}a, & \ \ \mathrm{if} \ {\lambda\leq \frac{1}{a^{2}};} \\
     t_{3}^{\ast}=\sqrt{\lambda}-\frac{1}{2a}, & \ \ \mathrm{if} \ {\lambda>\frac{1}{a^{2}}.}
    \end{array}
  \right.
\end{equation}
\end{theorem}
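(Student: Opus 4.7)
The plan is as follows. First, both $\|x-v\|_2^2$ and $P_a(x) = \sum_i \rho_a(x_i)$ are coordinatewise separable, so the minimization decouples into $n$ scalar subproblems of the form $\min_{x\in\Re}\{(x-v_i)^2 + \lambda \rho_a(x)\}$. Since $\rho_a$ depends only on $|x|$ while the quadratic is symmetric about $v_i$, a reflection argument shows that any minimizer satisfies $\mathrm{sgn}(x_i^{*}) = \mathrm{sgn}(v_i)$ when $v_i\neq 0$, so I may restrict to $v := |v_i| \geq 0$, solve on $x \geq 0$, and attach the factor $\mathrm{sgn}(v_i)$ at the end. This explains the $\mathrm{sgn}(v_i)$ prefactor in (\ref{equ22}).

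Second, I would derive the stationarity condition. On $(0,\infty)$, differentiating $f(x) = (x-v)^2 + \lambda \frac{ax}{ax+1}$ and clearing the denominator $(ax+1)^2$ reduces $f'(x)=0$ to the cubic $(v-x)(ax+1)^2 = \lambda a/2$. With the change of variables $y = ax+1$, this becomes $y^3 - (av+1)y^2 + \lambda a^2/2 = 0$, which I would depress via $y = z + (av+1)/3$ into the form $z^3 + pz + q = 0$ with $p = -(av+1)^2/3$. In the three-real-roots regime, the Viète trigonometric formula yields three candidate roots whose cosine arguments differ by $2\pi/3$; the appropriate largest root, after back-substitution through $y$ and then $x$, is exactly $g_{a,\lambda}(v)$ in (\ref{equ22}), with $\phi(v)$ emerging as the $\arccos$ of the normalized discriminant-like quantity.

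Third, I need to decide when this interior stationary point actually beats the boundary candidate $x=0$ (at which $f(0) = v^2$). Since the quadratic term is convex and $\rho_a$ is concave on $[0,\infty)$, $f$ admits at most two critical points on $(0,\infty)$, the larger being a local minimum and the smaller a local maximum; the Viète branch chosen in (\ref{equ22}) is the local minimum, as can be verified from the sign of $f''$ in that region. The threshold $t^{*}$ is then defined by the condition $f(g_{a,\lambda}(v)) = v^2$: below it the minimizer collapses to $0$, above it the minimizer is $g_{a,\lambda}(v)$. Lemma \ref{le2} foreshadows two regimes. When $\lambda \leq 1/a^2$, the positive critical point appears smoothly and the crossover occurs at $t_2^{*} = \lambda a/2$; when $\lambda > 1/a^2$, the objective exhibits a jump discontinuity and the equation $f(g_{a,\lambda}(v)) = v^2$ must be solved directly, giving $t_3^{*} = \sqrt{\lambda} - 1/(2a)$. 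The auxiliary quantity $t_1^{*}$ marks the value of $v$ below which the discriminant of the cubic precludes a positive real critical point, and the inequality chain $t_1^{*}\leq t_3^{*}\leq t_2^{*}$ of Lemma \ref{le2} ensures the threshold rule (\ref{equ23}) is consistent across both regimes.

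The main obstacle will be the case analysis that fuses root selection with the objective-value comparison. Specifically, checking that (\ref{equ22}) picks out the local minimum rather than the local maximum branch, and then carrying out the algebraic simplification of $f(g_{a,\lambda}(v)) = v^2$ to extract \emph{exactly} $t_2^{*}$ or $t_3^{*}$ according to whether $\lambda \leq 1/a^2$ or $\lambda > 1/a^2$, is the delicate computation. The coincidence of all three thresholds at $\lambda = 1/a^2$, recorded in Lemma \ref{le2}, is precisely the pivot that stitches the two regimes into a single continuous thresholding operator.
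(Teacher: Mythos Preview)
The paper does not actually prove Theorem~\ref{th5} in the text; it simply states that the proof uses Cardano's root-finding formula for cubics and defers the details to references \cite{xing27} and \cite{li30}. Your plan---coordinatewise separation, sign reduction to $v\ge 0$, the cubic stationarity equation $y^{3}-(av+1)y^{2}+\tfrac{\lambda a^{2}}{2}=0$ via $y=ax+1$, the Vi\`ete trigonometric solution in the three-real-root regime, and the value comparison against $f(0)=v^{2}$ to extract the threshold---is exactly the route those references take, so your proposal is consistent with (and considerably more explicit than) what the paper itself offers.

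One small sharpening worth recording when you write it out: your description of the two threshold regimes is qualitatively right but can be made precise without solving $f(g_{a,\lambda}(v))=v^{2}$ by brute force. The value $t_{2}^{\ast}=\lambda a/2$ is exactly where $f'(0^{+})=-2v+\lambda a$ changes sign; when $\lambda\le 1/a^{2}$ one can check that the interior local minimum emerges continuously from $x=0$ at $v=t_{2}^{\ast}$, so the global switch coincides with $t_{2}^{\ast}$. When $\lambda>1/a^{2}$ the interior minimum is born away from $0$ (at $v=t_{1}^{\ast}$, where the cubic discriminant vanishes), and the equal-value condition $f(x^{\ast})=v^{2}$ then reduces, after using the stationarity relation $(v-x^{\ast})(ax^{\ast}+1)^{2}=\lambda a/2$, to $(2a x^{\ast}+1)^{2}=0$ at the crossover, i.e.\ $x^{\ast}=-\tfrac{1}{2a}$ formally, which back-substituted gives $v=t_{3}^{\ast}=\sqrt{\lambda}-\tfrac{1}{2a}$. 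This is the ``delicate computation'' you flagged, and it goes through cleanly.
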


The proof of Theorem \ref{th5} used the Cartan¡¯s root-finding formula expressed in terms of hyperbolic functions and it is  a special case of the reference \cite{xing27}, and the detailed proof can be seen in \cite{li30}.

\begin{figure}[h!]
  \begin{minipage}[t]{0.49\linewidth}
  \centering
  \includegraphics[width=1.1\textwidth]{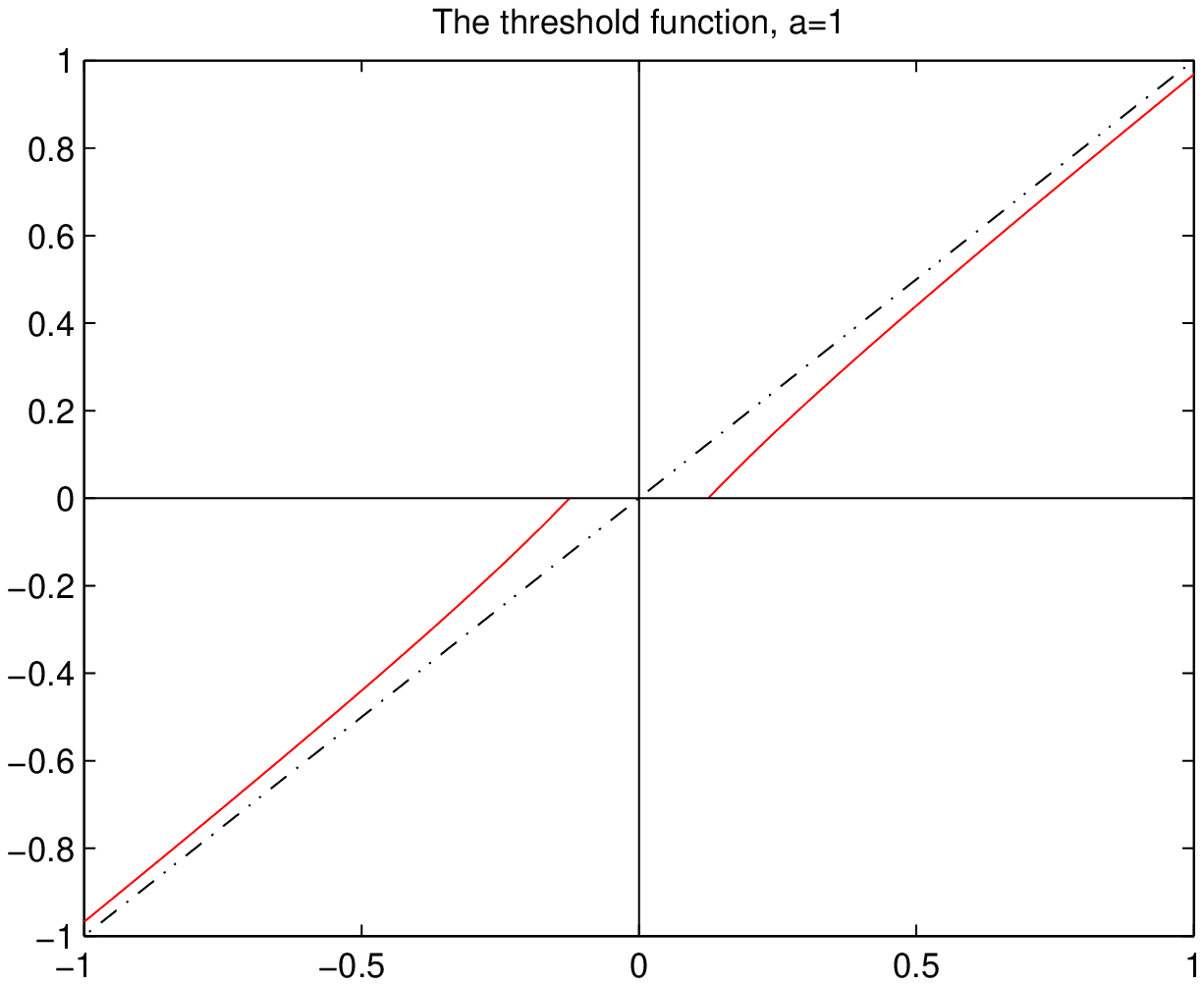}
  \end{minipage}
  \begin{minipage}[t]{0.49\linewidth}
  \centering
  \includegraphics[width=1.1\textwidth]{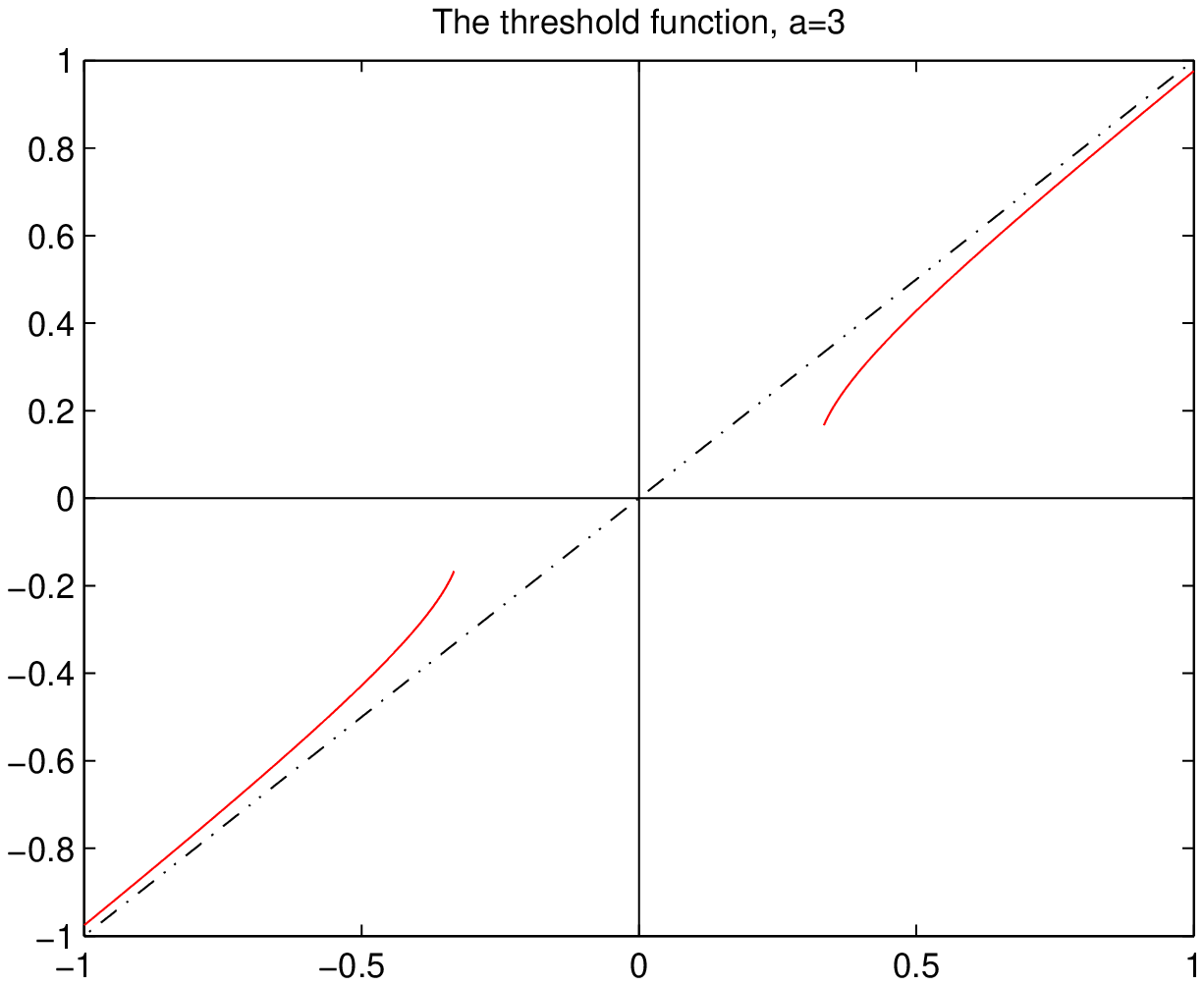}
  \end{minipage}
  \begin{minipage}[t]{0.49\linewidth}
  \centering
  \includegraphics[width=1.1\textwidth]{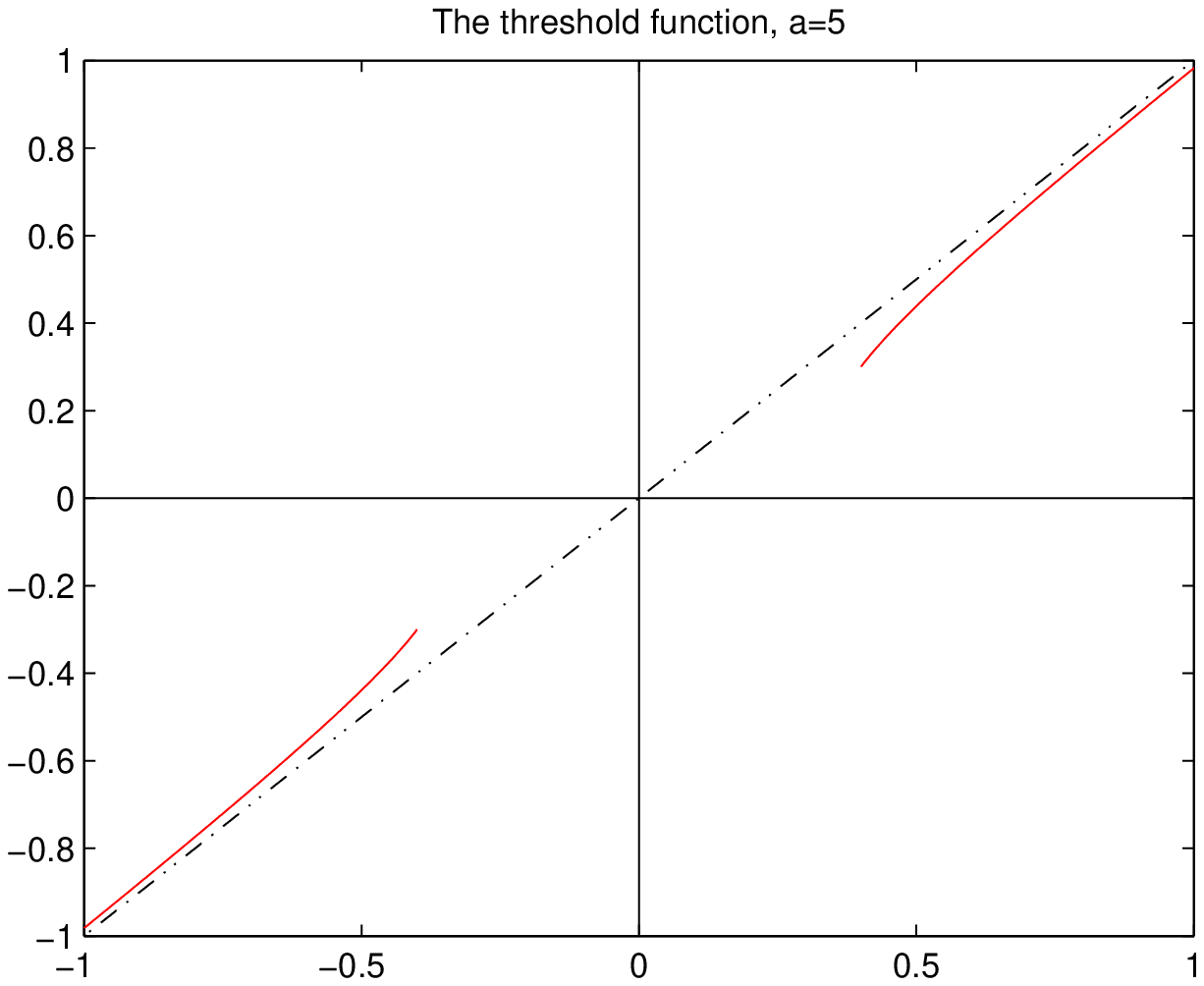}
  \end{minipage}
  \begin{minipage}[t]{0.49\linewidth}
  \centering
  \includegraphics[width=1.1\textwidth]{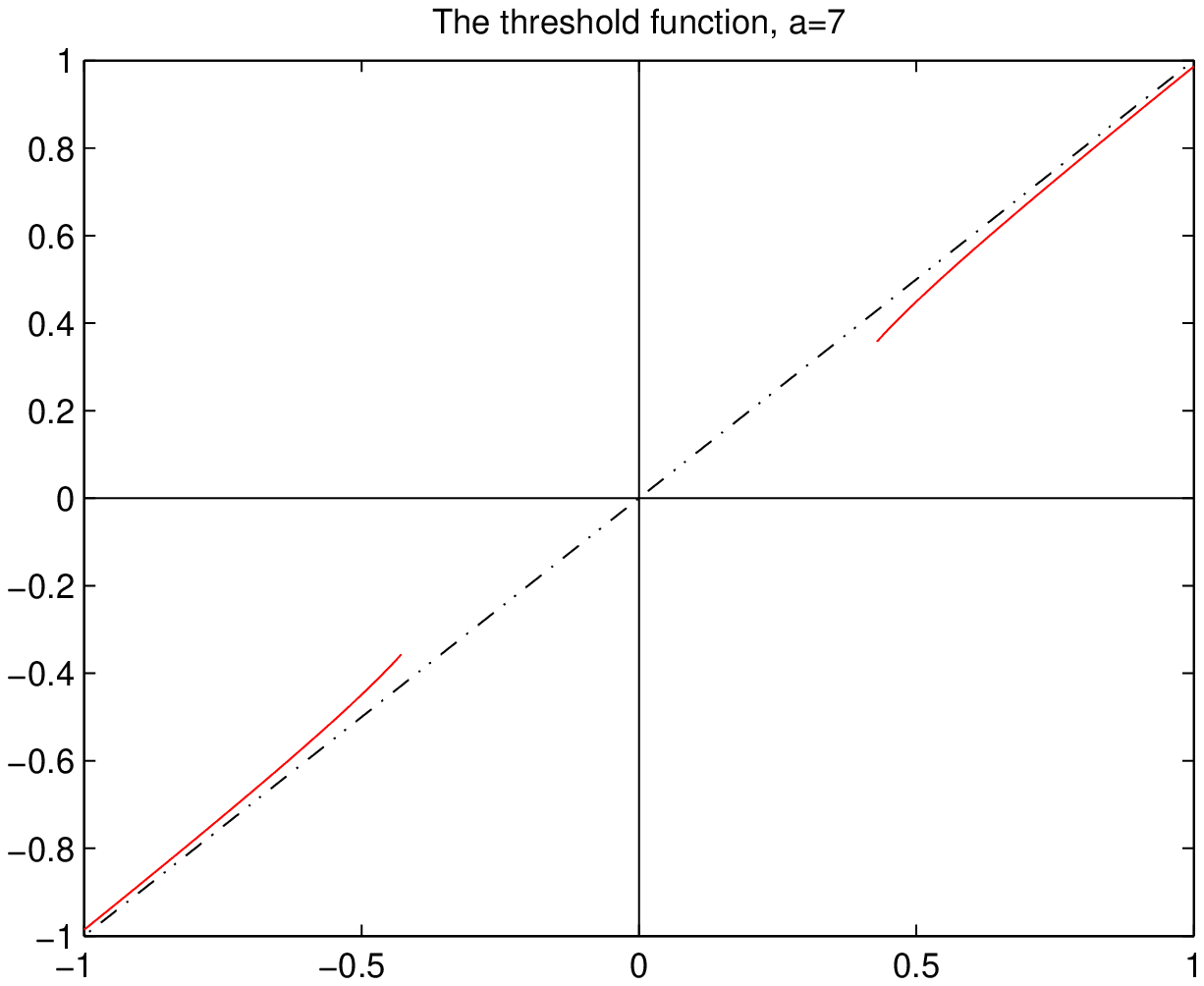}
  \end{minipage}
  \begin{minipage}[t]{0.49\linewidth}
  \centering
  \includegraphics[width=1.1\textwidth]{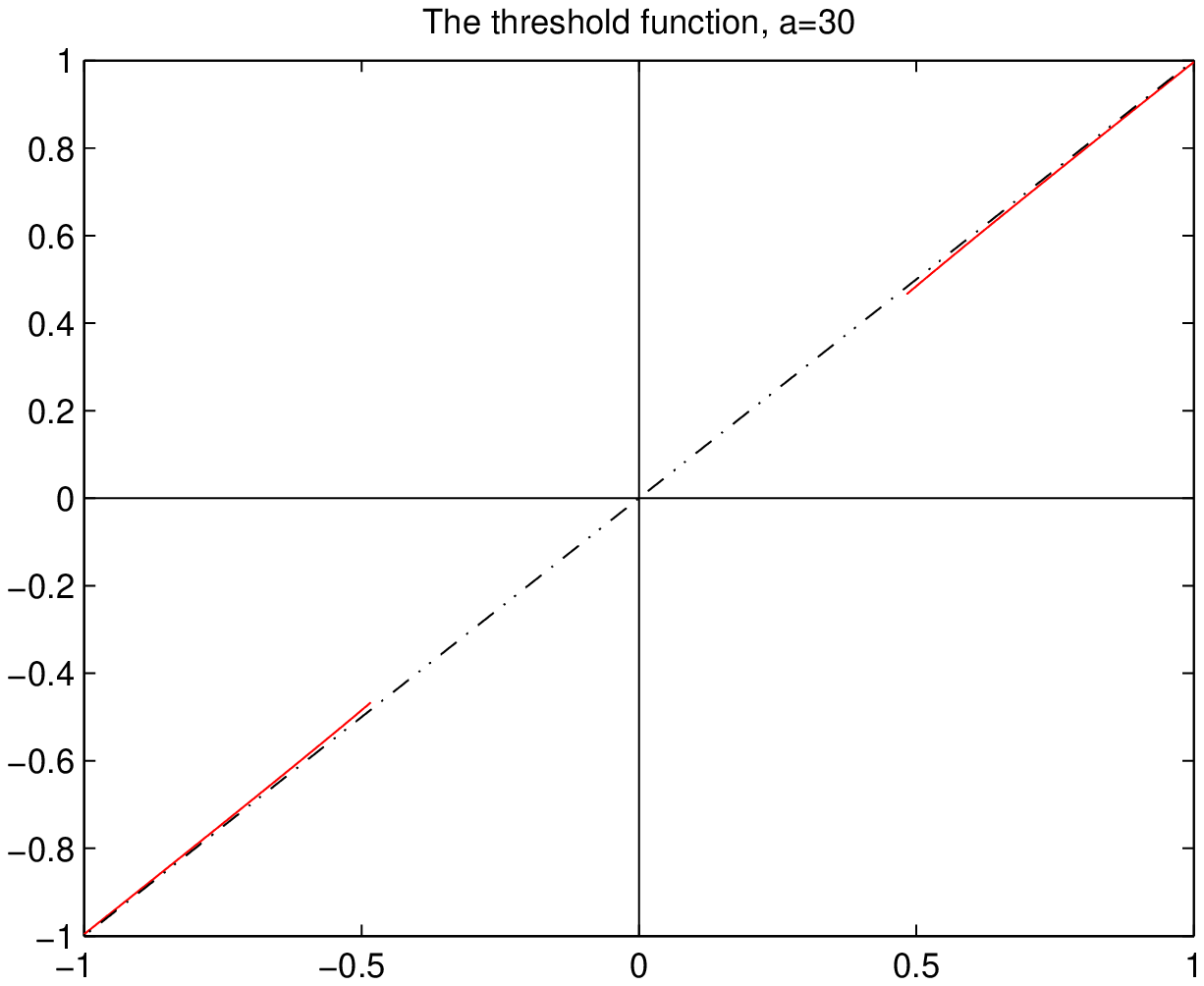}
  \end{minipage}
  \begin{minipage}[t]{0.49\linewidth}
  \centering
  \includegraphics[width=1.1\textwidth]{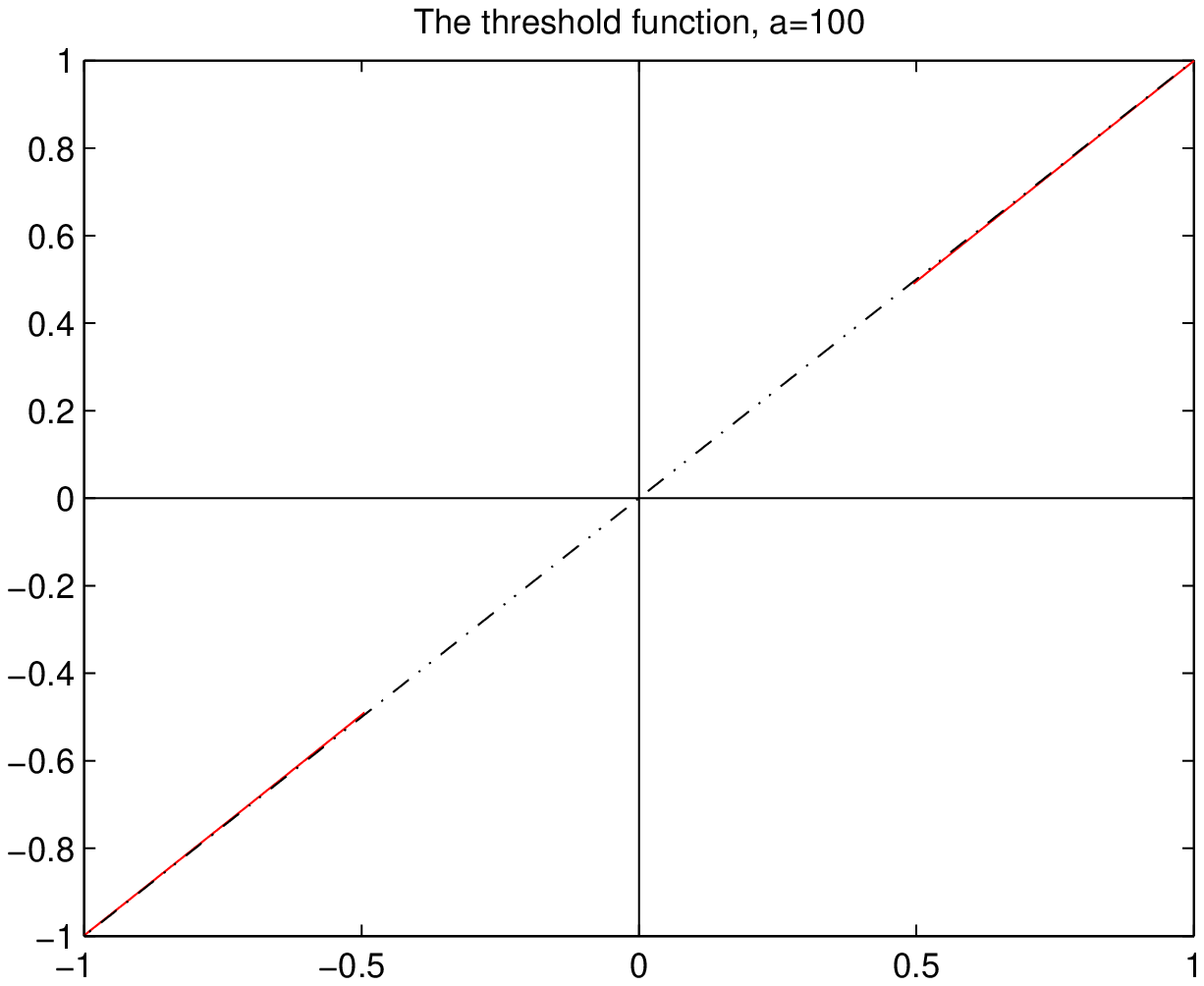}
  \end{minipage}
  \caption{The plots of $g_{a,\lambda}$ for a=1, 3, 5, 7, 30, 100, and $\lambda=0.25$.} \label{fig:2}
\end{figure}

\begin{definition}\label{de5}
Define the thresholding operator $\mathcal{T}_{a,\lambda}$ as a nonlinear analytically expressive operator, and can be specified by
\begin{equation}\label{equ24}
\mathcal{T}_{a,\lambda}(x)=(g_{a,\lambda}(x_{1}), \cdots, g_{a,\lambda}(x_{n}))^{T},
\end{equation}
where $g_{a,\lambda}(x_{i})$ is defined in Theorem \ref{th5}.
\end{definition}

The thresholding operator $\mathcal{T}_{a,\lambda}$ is a shrinking operator, and it is clear that if many of the absolute entries of vector $x$ are below the
threshold value $t^{\ast}$, the sparsity of $\mathcal{T}_{a,\lambda}(x)$ may be considerably lower than the sparsity of signal $x$ and leads to a sparse result.

\begin{definition}\label{de6}
Given any vector $v\in \Re^{n}$, define the projection map on $\Re^{n}_{+}$ by
$$\nabla_{+}(v):=\arg\min_{u\geq0}\{\|u-v\|_{2}^{2}\}=\max\{0, v\},$$
where the max operation is taken componentwise.
\end{definition}

\begin{theorem}\label{th6}
Let $v\in \Re^{n}$, we have
$$\mathcal{T}_{a,\lambda}(\nabla_{+}(v))=\arg\min_{x\geq0}\{\|x-v\|_{2}^{2}+\lambda P_{a}(x)\}$$
where $\mathcal{T}_{a,\lambda}$ and $\nabla_{+}$ are defined in Theorem \ref{th5} and Definition \ref{de6}.
\end{theorem}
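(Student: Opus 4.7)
The plan is to exploit the coordinate-wise separability of the objective and then reduce the problem to a scalar case-analysis on the sign of each $v_i$. Since
\[
\|x-v\|_{2}^{2}+\lambda P_{a}(x)=\sum_{i=1}^{n}\Big[(x_{i}-v_{i})^{2}+\lambda\rho_{a}(x_{i})\Big],
\]
and the nonnegativity constraint $x\geq 0$ also decouples as $x_{i}\geq 0$ for each $i$, the minimization reduces to the $n$ one-dimensional problems
\[
\min_{x_{i}\geq 0}\Big\{(x_{i}-v_{i})^{2}+\lambda\rho_{a}(x_{i})\Big\}.
\]
So it suffices to prove, for each coordinate, that the constrained scalar minimizer equals $\mathcal{T}_{a,\lambda}(\max\{0,v_{i}\})$.

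For the scalar problem I would split into two cases according to the sign of $v_{i}$. If $v_{i}\leq 0$, then $\nabla_{+}(v_{i})=0$ and, by Theorem \ref{th5}, $\mathcal{T}_{a,\lambda}(0)=0$ (since $|0|\leq t^{\ast}$), so the target value is $0$. On the other hand, for any $x_{i}\geq 0$ and $v_{i}\leq 0$ we have $(x_{i}-v_{i})^{2}\geq v_{i}^{2}$ with equality iff $x_{i}=0$, and $\rho_{a}(x_{i})\geq \rho_{a}(0)=0$, again with equality iff $x_{i}=0$. Hence the unique minimizer is $x_{i}=0$, matching $\mathcal{T}_{a,\lambda}(\nabla_{+}(v_{i}))$.

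If $v_{i}>0$, then $\nabla_{+}(v_{i})=v_{i}$, so the claim reduces to showing that the unconstrained minimizer of $(x_{i}-v_{i})^{2}+\lambda\rho_{a}(x_{i})$ already lies in $[0,+\infty)$. By Theorem \ref{th5}, that unconstrained minimizer is $\mathcal{T}_{a,\lambda}(v_{i})$, which is either $0$ (when $|v_{i}|\leq t^{\ast}$) or equals $g_{a,\lambda}(v_{i})$ with $\mathrm{sgn}(v_{i})=+1$, hence nonnegative in either case. Therefore it remains optimal when we further impose $x_{i}\geq 0$, and again equals $\mathcal{T}_{a,\lambda}(\nabla_{+}(v_{i}))$.

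The only delicate point is the case $v_{i}>0$: one needs to be confident that the sign structure built into the formula for $g_{a,\lambda}$ in Theorem \ref{th5} really keeps the output in $[0,+\infty)$ whenever the input is positive, so that shrinking toward zero never overshoots into negative territory. This follows from the explicit form $g_{a,\lambda}(v_{i})=\mathrm{sgn}(v_{i})\cdot h(|v_{i}|)$ together with the fact that, on the regime $|v_{i}|>t^{\ast}$ where $g_{a,\lambda}$ is used, the inner factor $h(|v_{i}|)$ is nonnegative (otherwise $0$ would be a strictly better unconstrained value, contradicting optimality of $g_{a,\lambda}(v_{i})$). Combining the two cases coordinate-wise yields the theorem.
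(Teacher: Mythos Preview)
Your proof is correct. Both your argument and the paper's ultimately rest on the same two ingredients---coordinate separability of the objective and a sign split on $v_{i}$---but the execution differs. The paper works at the vector level: it partitions $\{1,\dots,n\}$ into index sets $\mathcal{I}^{+}$ and $\mathcal{I}^{-}$ according to the sign of $v_{i}$ (the paper writes $x_{i}$, which appears to be a typo) and then establishes the algebraic identity $\|(x-v)_{+}\|_{2}^{2}+\|x_{-}\|_{2}^{2}=\|x-\nabla_{+}(v)\|_{2}^{2}$, from which a chain of equalities converts the constrained problem over $x\geq 0$ into the unconstrained problem $\min_{x\in\Re^{n}}\{\|x-\nabla_{+}(v)\|_{2}^{2}+\lambda P_{a}(x)\}$, and then invokes the definition of $\mathcal{T}_{a,\lambda}$. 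You instead drop to the scalar level immediately and verify the claim coordinate by coordinate. Your route is more elementary and arguably cleaner; the paper's route has the advantage of producing, as a by-product, the vector-level equivalence
\[
\arg\min_{x\geq 0}\{\|x-v\|_{2}^{2}+\lambda P_{a}(x)\}=\arg\min_{x\in\Re^{n}}\{\|x-\nabla_{+}(v)\|_{2}^{2}+\lambda P_{a}(x)\},
\]
which is reused verbatim in the proof of Theorem~\ref{th7}. Your justification that $g_{a,\lambda}(v_{i})\geq 0$ when $v_{i}>0$ (by comparing with the value at $0$) is valid and fills a point the paper leaves implicit.
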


\begin{proof}
Given any vector $v\in \Re^{n}$, let us introduce the following notations
$$x_{+}=x_{\mathcal{I}^{+}}\ \ \mathrm{and}\ \ x_{-}=x_{\mathcal{I}^{-}},$$
where
$$\mathcal{I}^{+}=\{i\ |\ \ i\in(1,2,\cdots,n), \ \ x_{i}\geq0\}$$
and
$$\mathcal{I}^{-}=\{i\ |\ \ i\in(1,2,\cdots,n), \ \ x_{i}<0\}.$$
Observe that the following relations hold
\begin{description}
  \item[(\romannumeral1)] $\|x\|_{2}^{2}=\|x_{+}\|_{2}^{2}+\|x_{-}\|_{2}^{2}$
  \item[(\romannumeral2)] $\|(x-v)_{+}\|_{2}^{2}+\|x_{-}\|_{2}^{2}=\|x-\nabla_{+}(v)\|_{2}^{2}$
  \item[(\romannumeral3)] $\|x_{-}\|_{2}^{2}=0\Leftrightarrow x_{i}=0\ \ \ \forall i\in \mathcal{I}^{-}$,
\end{description}
where the second relation follows from relation (i) and the fact that
$$(\nabla_{+}(v))_{i}=v_{i}$$
for any $i\in \mathcal{I}^{+}$ and
$$(\nabla_{+}(v))_{i}=0$$
for any $i\in \mathcal{I}^{-}$.

From the above facts (i)-(iii), we thus have that $\bar{x}\in \mathcal{T}_{a,\lambda}(\nabla_{+}(v))$ if and only if
\begin{eqnarray*}
\bar{x}&=&\arg\min_{x\geq0}\{\|x-v\|_{2}^{2}+\lambda P_{a}(x)\}\\
&=&\arg\min_{x\geq0}\{(\|(x-v)_{+}\|_{2}^{2}-\|(x-v)_{-}\|_{2}^{2})+\lambda P_{a}(x)\}\\
&=&\arg\min_{x\geq0}\{(\|(x-v)_{+}\|_{2}^{2}+\|x_{-}\|_{2}^{2}-2\sum_{i\in \mathcal{I}^{-}}x_{i}v_{i})\\
&&+\lambda P_{a}(x)\}\\
&=&\arg\min_{x\geq0}\{\|(x-v)_{+}\|_{2}^{2}+\lambda P_{a}(x): x_{i}=0\ \forall i\in \mathcal{I}^{-}\}\\
&=&\arg\min_{x\in \mathcal{R}^{n}}\{\|(x-v)_{+}\|_{2}^{2}+\lambda P_{a}(x): \|x_{-}\|_{2}^{2}=0\}\\
&=&\arg\min_{x\in \mathcal{R}^{n}}\{(\|(x-v)_{+}\|_{2}^{2}+\|x_{-}\|_{2}^{2})+\lambda P_{a}(x)\}\\
&=&\arg\min_{x\in \mathcal{R}^{n}}\{\|x-\nabla_{+}(v)\|_{2}^{2}+\lambda P_{a}(x)\}\\
&=&\mathcal{T}_{a,\lambda}(\nabla_{+}(v)).
\end{eqnarray*}
\end{proof}

Now, we show that the optimal solution to the problem ($FP_{a,\lambda}^{\geq}$) can be expressed as a thresholding operation.

For any fixed positive parameters $\lambda>0$, $\mu>0$, $a>0$ and $x,z\in \Re^{n}$, let
\begin{equation}\label{equ25}
C_{1}(x)=\|Ax-b\|_{2}^{2}+\lambda P_{a}(x)
\end{equation}
and its surrogate function
\begin{equation}\label{equ26}
\begin{array}{llll}
C_{2}(x,z)&=&\mu[C_{1}(x)-\|Ax-Az\|_{2}^{2}]+\|x-z\|_{2}^{2}
\end{array}
\end{equation}
where $\mu>0$ is a balancing parameter. Clearly, $C_{2}(x,x)=\mu C_{1}(x)$.

\begin{theorem}\label{th7}
For any fixed positive parameters $\lambda>0$, $\mu>0$ and $z\in \Re^{n}$, $\displaystyle\min_{x\geq0}C_{2}(x,z)$ equivalents to
\begin{equation}\label{equ27}
\min_{x\in \Re^{n}}\{\|x-\nabla_{+}(B_{\mu}(z))\|_{2}^{2}+\lambda\mu P_{a}(x)\}
\end{equation}
where $B_{\mu}(z)=z+\mu A^{T}(b-Az)$.
\end{theorem}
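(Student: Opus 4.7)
The plan is to reduce $\min_{x \geq 0} C_2(x,z)$ to the form $\min_{x \geq 0}\{\|x - B_\mu(z)\|_2^2 + \lambda \mu P_a(x)\}$ by completing the square in $x$, and then invoke Theorem \ref{th6} to absorb the nonnegativity constraint into the projection $\nabla_+$.

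First, I would expand the definition
$$
C_2(x,z) = \mu\|Ax-b\|_2^2 - \mu\|Ax-Az\|_2^2 + \lambda\mu P_a(x) + \|x-z\|_2^2.
$$
The key observation is that the quadratic term $\mu\|Ax\|_2^2$ appearing in $\mu\|Ax-b\|_2^2$ cancels against the $-\mu\|Ax\|_2^2$ coming from $-\mu\|Ax-Az\|_2^2$. What remains, after collecting the $x$-dependent terms, is a linear expression in $x$ of the form $-2\langle x,\, \mu A^T(b - Az)\rangle$ plus constants. Combining this with $\|x-z\|_2^2 = \|x\|_2^2 - 2\langle x, z\rangle + \|z\|_2^2$, the cross term becomes $-2\langle x,\, z + \mu A^T(b-Az)\rangle = -2\langle x, B_\mu(z)\rangle$, so completing the square yields
$$
C_2(x,z) = \|x - B_\mu(z)\|_2^2 + \lambda\mu P_a(x) + K(z),
$$
where $K(z)$ collects all terms independent of $x$.

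Since $K(z)$ does not affect the argmin over $x$, the problem $\min_{x\geq 0} C_2(x,z)$ is equivalent to
$$
\min_{x \geq 0}\bigl\{\|x - B_\mu(z)\|_2^2 + \lambda\mu P_a(x)\bigr\}.
$$
Now I would apply Theorem \ref{th6} with the substitutions $v := B_\mu(z)$ and $\lambda := \lambda\mu$. That theorem tells us exactly that
$$
\arg\min_{x\geq 0}\bigl\{\|x - B_\mu(z)\|_2^2 + \lambda\mu P_a(x)\bigr\} = \mathcal{T}_{a,\lambda\mu}\bigl(\nabla_+(B_\mu(z))\bigr),
$$
and by the definition of the thresholding operator $\mathcal{T}_{a,\lambda\mu}$ this coincides with $\arg\min_{x\in\mathbb{R}^n}\{\|x - \nabla_+(B_\mu(z))\|_2^2 + \lambda\mu P_a(x)\}$, which is exactly the right-hand side of the claim.

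The proof is largely algebraic, so the main (minor) obstacle is just executing the completion-of-square step carefully enough to confirm that the cross-terms in $A$ cancel out — i.e., the specific interplay between $\mu\|Ax-b\|_2^2$ and $-\mu\|Ax-Az\|_2^2$ in the definition of $C_2$ is precisely what is needed to turn the composite objective into a simple proximal-type subproblem. Once that cancellation is verified, Theorem \ref{th6} does the rest of the work automatically.
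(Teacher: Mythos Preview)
Your proposal is correct and follows essentially the same approach as the paper: the paper also completes the square to rewrite $C_2(x,z)=\|x-B_\mu(z)\|_2^2+\lambda\mu P_a(x)+K(z)$ with the same constant $K(z)=\mu\|b\|_2^2+\|z\|_2^2-\mu\|Az\|_2^2-\|B_\mu(z)\|_2^2$, and then invokes Theorem~\ref{th6} with $v=B_\mu(z)$ to pass from the constrained minimization over $x\geq 0$ to the unconstrained minimization over $x\in\Re^n$ with $\nabla_+(B_\mu(z))$ in place of $B_\mu(z)$.
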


\begin{proof}
We first notice that, $C_{\mu}(x,z)$ can be rewritten as
\begin{eqnarray*}
C_{2}(x,z)&=&\|x-(z-\mu A^{T}Az+\mu A^{T}b)\|_{2}^{2}+\lambda\mu P_{a}(x)\\
&&+\mu\|b\|_{2}^{2}+\|z\|_{2}^{2}-\mu\|Az\|_{2}^{2}-\|z-\mu A^{T}A)z\\
&&+\mu A^{T}b\|_{2}^{2}\\
&=&\|x-B_{\mu}(z)\|_{2}^{2}+\lambda\mu P_{a}(x)+\mu\|b\|_{2}^{2}+\|z\|_{2}^{2}\\
&&-\mu\|Az\|_{2}^{2}-\|B_{\mu}(z)\|_{2}^{2}.
\end{eqnarray*}
Combined with Theorem \ref{th6}, we can get that $\displaystyle\min_{x\geq0}C_{2}(x,z)$, for any fixed $\mu,\ \lambda$ and $z\in \Re^{n}$,
equivalents to
$$\min_{x\in\Re^{n}}\{\|x-\nabla_{+}(B_{\mu}(z))\|_{2}^{2}+\lambda\mu P_{a}(x)\}.$$
\end{proof}

\begin{corollary}\label{co4}
Let $x^{\ast}=(x_{1}^{\ast},x_{2}^{\ast},\cdots, x_{n}^{\ast})^{T}$ be an optimal solution of $\displaystyle\min_{x\geq0}C_{2}(x,z)$
if and only if, for any $i, x_{i}^{\ast}$ solves the problem
$$\min_{x_{i}\in\Re}\{(x_{i}-(\nabla_{+}(B_{\mu}(z)))_{i})^{2}+\lambda\mu \rho_{a}(x_{i})\}.$$
\end{corollary}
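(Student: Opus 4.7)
The plan is to derive Corollary \ref{co4} as an immediate consequence of Theorem \ref{th7} combined with the coordinate-wise separability of the objective. First I would invoke Theorem \ref{th7} to replace the nonnegativity-constrained minimization $\min_{x\geq0}C_{2}(x,z)$ by the equivalent unconstrained minimization
\begin{equation*}
\min_{x\in\Re^{n}}\Big\{\|x-\nabla_{+}(B_{\mu}(z))\|_{2}^{2}+\lambda\mu P_{a}(x)\Big\}.
\end{equation*}
The point of this reduction is that the nonnegativity constraint has been absorbed into the projection $\nabla_{+}$ on the data vector, so the remaining minimization is over all of $\Re^{n}$ without coupling constraints between coordinates.

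Next I would observe that both pieces of the objective are fully separable in the coordinates of $x$: the quadratic term satisfies $\|x-v\|_{2}^{2}=\sum_{i=1}^{n}(x_{i}-v_{i})^{2}$ by definition of the Euclidean norm, and the penalty $P_{a}(x)=\sum_{i=1}^{n}\rho_{a}(x_{i})$ is separable by its very definition (\ref{equ5}). Writing $v=\nabla_{+}(B_{\mu}(z))$, the objective therefore decouples as
\begin{equation*}
\sum_{i=1}^{n}\Big\{(x_{i}-v_{i})^{2}+\lambda\mu\,\rho_{a}(x_{i})\Big\},
\end{equation*}
a sum of $n$ independent one-dimensional problems, each involving only $x_{i}$.

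Since no two of the summands share a variable and each summand is bounded below, the standard fact that a separable sum is minimized exactly when each summand is minimized independently yields the stated equivalence: $x^{\ast}=(x_{1}^{\ast},\ldots,x_{n}^{\ast})^{T}$ minimizes the full objective if and only if, for every index $i$, the scalar $x_{i}^{\ast}$ solves
\begin{equation*}
\min_{x_{i}\in\Re}\Big\{(x_{i}-(\nabla_{+}(B_{\mu}(z)))_{i})^{2}+\lambda\mu\,\rho_{a}(x_{i})\Big\}.
\end{equation*}
I do not expect any real obstacle here, as the entire argument is a mechanical application of Theorem \ref{th7} followed by separability; the only point requiring any care is to make sure the reduction to the unconstrained problem on $\Re^{n}$ is invoked before the coordinate split, since separability would not otherwise be immediate in the presence of the $x\geq 0$ constraint.
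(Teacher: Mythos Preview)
Your proposal is correct and matches the paper's intent: the paper states Corollary~\ref{co4} without proof, treating it as an immediate consequence of Theorem~\ref{th7} together with the coordinate-wise separability of $\|x-\nabla_{+}(B_{\mu}(z))\|_{2}^{2}+\lambda\mu P_{a}(x)$, which is precisely the argument you give.
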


\begin{theorem}\label{th8}
For any fixed $\lambda>0$ and $0<\mu<\frac{1}{\|A\|_{2}^{2}}$. If $x^{\ast}$ is an optimal solution of
$\displaystyle\min_{x\geq0}C_{1}(x)$, then $x^{\ast}$ is also an optimal solution of $\displaystyle\min_{x\geq0}C_{2}(x,x^{\ast})$, that is
$$C_{2}(x^{\ast},x^{\ast})\leq C_{2}(x,x^{\ast})$$
for any $x\geq0$.
\end{theorem}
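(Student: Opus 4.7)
The plan is to compute the difference $C_{2}(x,x^{\ast})-C_{2}(x^{\ast},x^{\ast})$ directly and decompose it into two nonnegative pieces: one controlled by the optimality of $x^{\ast}$ for $C_{1}$, the other controlled by the step-size bound on $\mu$.

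First I would observe that by the very definition of $C_{2}$, plugging $x=z=x^{\ast}$ collapses the two extra terms and yields $C_{2}(x^{\ast},x^{\ast})=\mu C_{1}(x^{\ast})$. Next, for an arbitrary $x\geq 0$, I would write out
\begin{equation*}
C_{2}(x,x^{\ast})=\mu C_{1}(x)-\mu\|A(x-x^{\ast})\|_{2}^{2}+\|x-x^{\ast}\|_{2}^{2},
\end{equation*}
so that the difference becomes
\begin{equation*}
C_{2}(x,x^{\ast})-C_{2}(x^{\ast},x^{\ast})=\mu\bigl[C_{1}(x)-C_{1}(x^{\ast})\bigr]+\bigl[\|x-x^{\ast}\|_{2}^{2}-\mu\|A(x-x^{\ast})\|_{2}^{2}\bigr].
\end{equation*}

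Now I would handle the two bracketed terms separately. The first is nonnegative because $x\geq 0$ and $x^{\ast}$ is assumed to be a global minimizer of $C_{1}$ over the nonnegative orthant, and $\mu>0$. For the second, I would use the operator-norm inequality $\|A(x-x^{\ast})\|_{2}^{2}\leq\|A\|_{2}^{2}\,\|x-x^{\ast}\|_{2}^{2}$, together with the assumption $0<\mu<1/\|A\|_{2}^{2}$, to obtain
\begin{equation*}
\|x-x^{\ast}\|_{2}^{2}-\mu\|A(x-x^{\ast})\|_{2}^{2}\geq (1-\mu\|A\|_{2}^{2})\,\|x-x^{\ast}\|_{2}^{2}\geq 0.
\end{equation*}
Adding the two nonnegative quantities yields $C_{2}(x,x^{\ast})\geq C_{2}(x^{\ast},x^{\ast})$ for every $x\geq 0$, which is exactly the claim.

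There is not really a hard step here; the proof is a one-line algebraic identity followed by two elementary inequalities. The only place one has to be slightly careful is in verifying that the feasible region for both minimizations is the same nonnegative orthant, so that the optimality of $x^{\ast}$ for $C_{1}$ can legitimately be invoked against an arbitrary competitor $x\geq 0$ in the $C_{2}$ problem; this is immediate from the way $C_{2}$ was defined as a majorization of $\mu C_{1}$.
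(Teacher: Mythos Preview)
Your proof is correct and is essentially the same argument as the paper's: both use $C_{2}(x^{\ast},x^{\ast})=\mu C_{1}(x^{\ast})$, the optimality of $x^{\ast}$ for $C_{1}$ over $x\geq 0$, and the step-size bound $\mu\|A\|_{2}^{2}<1$ to control the quadratic remainder. The only cosmetic difference is that the paper writes a single chain of inequalities $C_{2}(x,x^{\ast})\geq \mu C_{1}(x)\geq \mu C_{1}(x^{\ast})=C_{2}(x^{\ast},x^{\ast})$, whereas you explicitly separate the difference into two nonnegative summands.
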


\begin{proof}
Based on the definition of $C_{2}(x,z)$, we have
\begin{eqnarray*}
C_{2}(x,x^{\ast})&=&\mu[C_{1}(x)-\|Ax-Ax^{\ast}\|_{2}^{2}]+\|x-x^{\ast}\|_{2}^{2}\\
&=&\mu[\|Ax-b\|_{2}^{2}+\lambda P_{a}(x)]+\|x-x^{\ast}\|_{2}^{2}\\
&&-\mu\|Ax-Ax^{\ast}\|_{2}^{2}\\
&\geq&\mu[\|Ax-b\|_{2}^{2}+\lambda P_{a}(x)]\\
&=&\mu C_{1}(x)\\
&\geq&\mu C_{1}(x^{\ast})\\
&=&C_{2}(x^{\ast},x^{\ast}).
\end{eqnarray*}
\end{proof}

Theorem \ref{th8} shows that $x^{\ast}$ is an optimal solution to $\displaystyle\min_{x\in \Re^{n}}C_{\mu}(x,x^{\ast})$ as long as $x^{\ast}$
is an optimal solution of the problem ($FP_{a,\lambda}^{\geq}$). Combined with Theorem \ref{th7}, we derive the most important conclusion in this paper,
which underlies the algorithm to be proposed.

\begin{corollary}\label{co5}
Let $x^{\ast}$ be an optimal solution of the problem ($FP_{a,\lambda}^{\geq}$). Then $x^{\ast}$ is also an optimal solution of the following minimization problem
$$\min_{x\in \Re^{n}}\{\|x-B_{\mu}(x^{\ast})\|_{2}^{2}+\lambda\mu P_{a}(x)\}$$
for any $x\in \Re^{n}$.
\end{corollary}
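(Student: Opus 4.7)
The plan is to read Corollary \ref{co5} as a direct compositional consequence of Theorem \ref{th8} followed by Theorem \ref{th7}. I would first fix a stepsize $\mu$ with $0<\mu<1/\|A\|_{2}^{2}$ and invoke Theorem \ref{th8}: because $x^{\ast}$ is an optimal solution of $(FP_{a,\lambda}^{\geq})=\min_{x\geq 0}C_{1}(x)$, the surrogate inequality $C_{2}(x^{\ast},x^{\ast})\leq C_{2}(x,x^{\ast})$ holds for every $x\geq 0$, so $x^{\ast}$ is also a minimizer of $\min_{x\geq 0}C_{2}(x,x^{\ast})$. The stepsize restriction is used only here, to guarantee the majorization $\mu\|Ax-Ax^{\ast}\|_{2}^{2}\leq\|x-x^{\ast}\|_{2}^{2}$ that underpins the surrogate construction.

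Next I would call Theorem \ref{th7} at the base point $z=x^{\ast}$. Completing the square inside $C_{2}(x,x^{\ast})$ produces $\|x-B_{\mu}(x^{\ast})\|_{2}^{2}+\lambda\mu P_{a}(x)$ plus terms that depend only on $x^{\ast}$, $b$, and $\mu$; Theorem \ref{th6} then removes the nonnegativity constraint by passing to $\nabla_{+}(B_{\mu}(x^{\ast}))$ inside the quadratic, turning $\min_{x\geq 0}C_{2}(x,x^{\ast})$ into an equivalent unconstrained problem with the same minimizer. Chaining this equivalence with the first step identifies $x^{\ast}$ as a minimizer of the unconstrained objective displayed in Corollary \ref{co5}, which is precisely the assertion.

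I do not foresee a substantive obstacle: the two ingredients have already been proved, and the corollary is essentially a bookkeeping statement stitching them together. The only point that warrants care is tracking the additive constants produced when $C_{2}(x,x^{\ast})$ is rewritten in square-plus-constant form, and verifying that they are independent of $x$ so that the $\arg\min$ is preserved. This is immediate from the algebra already carried out in the proof of Theorem \ref{th7}, so the argument reduces to a one-line chain of implications.
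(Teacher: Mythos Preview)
Your plan is exactly the paper's own (implicit) argument: the sentence immediately preceding Corollary~\ref{co5} says only that Theorem~\ref{th8} combined with Theorem~\ref{th7} yields the claim, and you have spelled out precisely that chain, including the bookkeeping about the additive constants in the square-completion.

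One small wrinkle worth flagging: the chain you describe, via Theorem~\ref{th7} and Theorem~\ref{th6}, lands on the unconstrained problem with $\nabla_{+}(B_{\mu}(x^{\ast}))$ inside the quadratic, not $B_{\mu}(x^{\ast})$ as printed in the corollary. The paper's very next displayed identity~(\ref{equ28}) reinstates the projection, so this looks like a typographical slip in the statement rather than a gap in your reasoning; your derivation matches what the paper actually uses downstream.
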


Combining Corollary \ref{co5} and Theorem \ref{th5}, \ref{th6}, we can immediately conclude that the thresholding operation of the problem ($FP_{a,\lambda}^{\geq}$) can be given by
\begin{equation}\label{equ28}
x^{\ast}=\mathcal{T}_{a,\lambda\mu}(\nabla_{+}(B_{\mu}(x^{\ast})))
\end{equation}
where $\mathcal{T}_{a,\lambda\mu}$ is obtained by replacing $\lambda$ with $\lambda\mu$ in $\mathcal{T}_{a,\lambda}$.

With the thresholding representation (\ref{equ28}), the procedure of the NIT algorithm can be naturally defined as
\begin{equation}\label{equ29}
x^{k+1}=\mathcal{T}_{a,\lambda\mu}(\nabla_{+}(B_{\mu}(x^{k}))),
\end{equation}
where $B_{\mu}(x^{k})=x^{k}+\mu A^{T}(b-Ax^{k})$.

\subsection{Adjusting values for the regularization parameter}

In our algorithm, the cross-validation method is accepted to select the proper regularization parameter. Schwarz in \cite{schw55} demonstrated that when some prior information is known for a
regularization problem, this selection is more reasonably and intelligently.

We suppose that the vector $x^{\ast}$ of sparsity $r$ is the optimal solution of the problem $(FP_{a,\lambda}^{\geq})$,
without loss of generality, we set
\begin{equation}\label{equ30}
\begin{array}{llll}
&&(\nabla_{+}(B_{\mu}(x^{\ast})))_{1}\geq\cdots\geq(\nabla_{+}(B_{\mu}(x^{\ast})))_{r}\\
&&\geq(\nabla_{+}(B_{\mu}(x^{\ast})))_{r+1}=\cdots=0.
\end{array}
\end{equation}
By Theorem \ref{th5}, we have
$$(\nabla_{+}(B_{\mu}(x^{\ast})))_{i}>t^{\ast}\Leftrightarrow i\in \{1,2,\cdots,r\},$$
$$(\nabla_{+}(B_{\mu}(x^{\ast})))_{j}\leq t^{\ast}\Leftrightarrow j\in \{r+1, r+2,\cdots, n\},$$
where $t^{\ast}$ is the threshold value defined in (\ref{equ23}) obtained by replacing $\lambda$ with $\lambda\mu$.

According to $t_{3}^{\ast}\leq t_{2}^{\ast}$, we can get that
\begin{equation}\label{equ31}
\left\{
  \begin{array}{ll}
   (\nabla_{+}(B_{\mu}(x^{\ast})))_{r}\geq t^{\ast}\geq t_{3}^{\ast}=\sqrt{\lambda\mu}-\frac{1}{2a}; \\
   (\nabla_{+}(B_{\mu}(x^{\ast})))_{r+1}<t^{\ast}\leq t_{2}^{\ast}=\frac{\lambda\mu}{2}a,
  \end{array}
\right.
\end{equation}
which implies
\begin{equation}\label{equ32}
\frac{2(\nabla_{+}(B_{\mu}(x^{\ast})))_{r+1}}{a\mu}\leq\lambda\leq\frac{(2a(\nabla_{+}(B_{\mu}(x^{\ast})))_{r}+1)^{2}}{4a^{2}\mu}.
\end{equation}
The inequality (\ref{equ32}) helps us to set the strategy in selecting the best regularization parameter, and we denote $\lambda_{1}$ and $\lambda_{2}$
as the left and the right of above inequality respectively:
$$
\left\{
  \begin{array}{ll}
   \lambda_{1}=\frac{2(\nabla_{+}(B_{\mu}(x^{\ast})))_{r+1}}{a\mu}; \\
   \lambda_{2}=\frac{(2a(\nabla_{+}(B_{\mu}(x^{\ast})))_{r}+1)^{2}}{4a^{2}\mu}.
  \end{array}
\right.
$$
A choice of $\lambda$ is
\begin{equation}\label{equ33}
\lambda=\left\{
            \begin{array}{ll}
              \lambda_{1}, & {\mathrm{if}\ \lambda_{1}\leq\frac{1}{a^{2}\mu};} \\
              \lambda_{2}, & {\mathrm{if}\ \lambda_{1}>\frac{1}{a^{2}\mu}.}
            \end{array}
          \right.
\end{equation}

Since $x^{\ast}$ is unknown, and $x^{k}$ can be viewed as the best available approximation to $x^{\ast}$, a proper choice for the value of $\lambda$ at $k$-th iteration is given by
\begin{equation}\label{equ34}
\lambda=\left\{
            \begin{array}{ll}
             \lambda_{1}^{k}=\frac{2(\nabla_{+}(B_{\mu}(x^{k})))_{r+1}}{a\mu}, & {\mathrm{if}\ \lambda_{1}^{k}\leq\frac{1}{a^{2}\mu};} \\
             \lambda_{2}^{k}=\frac{(2a(\nabla_{+}(B_{\mu}(x^{k})))_{r}+1)^{2}}{4a^{2}\mu}, & {\mathrm{if}\ \lambda_{1}^{k}>\frac{1}{a^{2}\mu}.}
            \end{array}
          \right.
\end{equation}
That is, (\ref{equ34}) can be used to adjust the value of the regularization parameter $\lambda$ during iteration.

\begin{algorithm}[h!]
\caption{: NIT algorithm}
\label{alg:A}
\begin{algorithmic}
\STATE {Initialize: Choose $x^{0}$, $\mu_{0}=\frac{1-\varepsilon}{\|A\|_{2}^{2}}$ and $a$;}
\STATE {\textbf{while} not converged \textbf{do}}
\STATE \ \ \ {$z^{k}:=B_{\mu}(x^{k})=x^{n}+\mu A^{T}(y-Ax^{k})$;}
\STATE \ \ \ {$\lambda^{k}_{1}=\frac{2|B_{\mu}(x^{k})|_{r+1}}{a\mu}$; $\lambda^{k}_{2}=\frac{(2a|B_{\mu}(x^{k})|_{r}+1)^{2}}{4a^{2}\mu}$;}
\STATE \ \ \ {if\ $\lambda_{1}^{k}\leq\frac{1}{a^{2}\mu}$\ then}
\STATE \ \ \ \ \ \ \ {$\lambda=\lambda_{1}^{k}$; $t=\frac{\lambda\mu a}{2}$}
\STATE \ \ \ \ \ \ \ {for\ $i=1:\mathrm{length}(x)$}
\STATE \ \ \ {1.\ $(\nabla_{+}(B_{\mu}(x^{k})))_{i}>t$, $x^{k+1}_{i}=g_{\lambda\mu}((\nabla_{+}(B_{\mu}(x^{k})))_{i})$;}
\STATE \ \ \ {2.\ $(\nabla_{+}(B_{\mu}(x^{k})))_{i}\leq t$, $x^{k+1}_{i}=0$;}
\STATE \ \ \ {else}
\STATE \ \ \ \ \ \ \ {$\lambda=\lambda_{2}^{k}$; $t=\max\{\sqrt{\lambda\mu}-\frac{1}{2a}$, 0\}}
\STATE \ \ \ \ \ \ \ {for\ $i=1:\mathrm{length}(x)$}
\STATE \ \ \ {1.\ $(\nabla_{+}(B_{\mu}(x^{k})))_{i}>t$, $x^{k+1}_{i}=g_{\lambda\mu}((\nabla_{+}(B_{\mu}(x^{k})))_{i})$;}
\STATE \ \ \ {2.\ $(\nabla_{+}(B_{\mu}(x^{k})))_{i}\leq t$, $x^{k+1}_{i}=0$;}
\STATE \ \ \ {end}
\STATE \ \ \ {$k\rightarrow k+1$}
\STATE{\textbf{end while}}
\STATE{\textbf{return}: $x^{k+1}$}
\end{algorithmic}
\end{algorithm}

\section{The convergence of NIT algorithm} \label{section4}
In this section, we present the convergence of NIT algorithm under some specific conditions.
\begin{theorem} \label{th9}
Let $\{x^{k}\}$ be the sequence generated by iteration (\ref{equ29}) with the step size $\mu$ satisfying $0<\mu<\frac{1}{\|A\|_{2}^{2}}$. Then
\begin{description}
\item[$\mathrm{(1)}$] The sequence $\{C_{1}(x^{k})\}$ is decreasing;
\item[$\mathrm{(2)}$] $\{x^{k}\}$ is asymptotically regular, i.e., $\displaystyle\lim_{k\rightarrow\infty}\|x^{k+1}-x^{k}\|_{2}^{2}=0$;
\item[$\mathrm{(3)}$] Any accumulation point of $\{x^{k}\}$ is a stationary point of the problem $(FP_{a,\lambda}^{\geq})$.
\end{description}
\end{theorem}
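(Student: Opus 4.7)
The plan is to exploit the majorization--minimization framework already set up through $C_{1}$ and its surrogate $C_{2}$. By Theorem \ref{th7} combined with Corollary \ref{co5}, the iterate $x^{k+1}=\mathcal{T}_{a,\lambda\mu}(\nabla_{+}(B_{\mu}(x^{k})))$ is precisely the global minimizer of $\min_{x\geq 0}C_{2}(x,x^{k})$. Hence $C_{2}(x^{k+1},x^{k})\leq C_{2}(x^{k},x^{k})=\mu C_{1}(x^{k})$. Unpacking the definition of $C_{2}$ yields
\[
\mu C_{1}(x^{k+1})+\|x^{k+1}-x^{k}\|_{2}^{2}-\mu\|A(x^{k+1}-x^{k})\|_{2}^{2}\leq\mu C_{1}(x^{k}),
\]
and the step-size choice $0<\mu<1/\|A\|_{2}^{2}$ guarantees $\|x^{k+1}-x^{k}\|_{2}^{2}-\mu\|A(x^{k+1}-x^{k})\|_{2}^{2}\geq(1-\mu\|A\|_{2}^{2})\|x^{k+1}-x^{k}\|_{2}^{2}\geq 0$. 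This immediately delivers (1), with a strictly positive decrease whenever $x^{k+1}\neq x^{k}$.

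For (2), I would use $C_{1}(x^{k})\geq 0$, so the decreasing sequence $\{C_{1}(x^{k})\}$ converges. Rearranging the inequality above and summing telescopically gives
\[
\frac{1-\mu\|A\|_{2}^{2}}{\mu}\sum_{k=0}^{\infty}\|x^{k+1}-x^{k}\|_{2}^{2}\leq C_{1}(x^{0})-\lim_{k\to\infty}C_{1}(x^{k})<\infty,
\]
forcing $\|x^{k+1}-x^{k}\|_{2}^{2}\to 0$.

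For (3), let $x^{\ast}$ be an accumulation point along a subsequence $x^{k_{j}}\to x^{\ast}$; by (2), also $x^{k_{j}+1}\to x^{\ast}$. Since $x^{k_{j}+1}$ minimizes $\min_{x\geq 0}\{\|x-B_{\mu}(x^{k_{j}})\|_{2}^{2}+\lambda\mu P_{a}(x)\}$, its coordinates obey the KKT system: for each $i$, either $x^{k_{j}+1}_{i}>0$ with $2(x^{k_{j}+1}_{i}-B_{\mu}(x^{k_{j}})_{i})+\lambda\mu\rho_{a}'(x^{k_{j}+1}_{i})=0$, or $x^{k_{j}+1}_{i}=0$ with $-2B_{\mu}(x^{k_{j}})_{i}+\lambda\mu a\geq 0$, where $\rho_{a}'(0)=a$ denotes the right-derivative at zero. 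Passing to the limit $j\to\infty$ and unwinding $B_{\mu}(x^{\ast})=x^{\ast}+\mu A^{T}(b-Ax^{\ast})$ produces the KKT conditions of $(FP_{a,\lambda}^{\geq})$ at $x^{\ast}$.

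The main obstacle is that $\mathcal{T}_{a,\lambda\mu}$ is discontinuous at its threshold $t^{\ast}$, so indices may migrate between the active set $\{i:x^{k_{j}+1}_{i}>0\}$ and the zero set $\{i:x^{k_{j}+1}_{i}=0\}$ in the limit. This is overcome by observing that both branches of the KKT system remain mutually compatible: if $x^{k_{j}+1}_{i}\to 0^{+}$ along active indices, then the equality $2B_{\mu}(x^{k_{j}})_{i}=2x^{k_{j}+1}_{i}+\lambda\mu\rho_{a}'(x^{k_{j}+1}_{i})$ tends to $2B_{\mu}(x^{\ast})_{i}=\lambda\mu a$, which saturates the inequality $2B_{\mu}(x^{\ast})_{i}\leq\lambda\mu a$ required at a zero coordinate of $x^{\ast}$.
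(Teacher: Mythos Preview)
Your arguments for (1) and (2) match the paper's essentially line for line: both use that $x^{k+1}$ minimizes $C_{2}(\cdot,x^{k})$, unpack $C_{2}$ to obtain $\mu C_{1}(x^{k+1})+(1-\mu\|A\|_{2}^{2})\|x^{k+1}-x^{k}\|_{2}^{2}\leq \mu C_{1}(x^{k})$, and then telescope.

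For (3) you take a genuinely different route. The paper does \emph{not} pass to the limit in componentwise KKT conditions. Instead it keeps the full variational inequality: from
\[
\|x^{k_{j}+1}-\nabla_{+}(B_{\mu}(x^{k_{j}}))\|_{2}^{2}+\lambda\mu P_{a}(x^{k_{j}+1})\leq\|x-\nabla_{+}(B_{\mu}(x^{k_{j}}))\|_{2}^{2}+\lambda\mu P_{a}(x)
\]
for every $x\geq 0$, it uses continuity of $P_{a}$, $\nabla_{+}$ and $B_{\mu}$ together with $x^{k_{j}},x^{k_{j}+1}\to x^{\ast}$ to conclude that $x^{\ast}$ globally minimizes $\|x-\nabla_{+}(B_{\mu}(x^{\ast}))\|_{2}^{2}+\lambda\mu P_{a}(x)$, i.e.\ $x^{\ast}=\mathcal{T}_{a,\lambda\mu}(\nabla_{+}(B_{\mu}(x^{\ast})))$. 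This fixed-point identity is the paper's notion of stationarity; it is in fact a bit stronger than the first-order KKT system you derive, since it asserts global (not merely critical) optimality in the surrogate problem. The trade-off is that the paper's argument sidesteps entirely the active/inactive index bookkeeping and the threshold discontinuity you worry about, at the cost of leaving the connection to the KKT system of $(FP_{a,\lambda}^{\geq})$ implicit. Your approach, by contrast, makes that connection explicit. Incidentally, your ``main obstacle'' never actually materializes here: because $\mathcal{T}_{a,\lambda\mu}$ has a positive jump at $t^{\ast}$, any coordinate with $x^{k_{j}+1}_{i}>0$ is bounded uniformly away from $0$, so $x^{k_{j}+1}_{i}\to 0^{+}$ along active indices cannot occur; your compatibility check is correct but unnecessary.
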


\begin{proof}
(1) By the proof of Theorem \ref{th7}, we have
\begin{equation}\label{equ35}
C_{2}(x^{k+1}, x^{k})=\min_{x\geq0} C_{2}(x, x^{k}).
\end{equation}
Moreover, according to the definition of $C_{2}(x, z)$, we have
\begin{equation}\label{equ36}
\begin{array}{llll}
C_{1}(x^{k+1})&=&\frac{1}{\mu}[C_{2}(x^{k+1}, x^{k})-\|x^{k+1}-x^{k}\|_{2}^{2}]\\
&&+\|Ax^{k+1}-Ax^{k}\|_{2}^{2}.
\end{array}
\end{equation}
Since $0<\mu<\frac{1}{\|A\|_{2}^{2}}$, we can get that
\begin{equation}\label{equ37}
\begin{array}{llll}
C_{1}(x^{k+1})&=&\frac{1}{\mu}[C_{2}(x^{k+1}, x^{k})-\|x^{k+1}-x^{k}\|_{2}^{2}]\\
&&+\|Ax^{k+1}-Ax^{k}\|_{2}^{2}\\
&\leq&\frac{1}{\mu}[C_{2}(x^{k}, x^{k})-\|x^{k+1}-x^{k}\|_{2}^{2}]\\
&&+\|Ax^{k+1}-Ax^{k}\|_{2}^{2}\\
&=&C_{1}(x^{k})-\frac{1}{\mu}\|x^{k+1}-x^{k}\|_{2}^{2}\\
&&+\|Ax^{k+1}-Ax^{k}\|_{2}^{2}\\
&\leq&C_{1}(x^{k}).
\end{array}
\end{equation}
That is, the nonnegative sequence $\{x^{k}\}$ is a minimization sequence of function $C_{1}(x)$ for the constraint $x\geq0$, and
$$C_{1}(x^{k+1})\leq C_{1}(x^{k})$$
for all $k\geq0$.\\

(2) Let $\theta=1-\mu\|A\|_{2}^{2}$ and by the assumption about $\mu$, we have $\theta\in(0, 1)$, and
\begin{equation}\label{equ38}
\mu\|A(x^{k+1}-x^{k})\|_{2}^{2}\leq(1-\theta)\|x^{k+1}-x^{k}\|_{2}^{2}.
\end{equation}
By the inequality (\ref{equ37}), we can get that

\begin{equation}\label{equ39}
\begin{array}{llll}
&&\frac{1}{\mu}\|x^{k+1}-x^{k}\|_{2}^{2}-\|A(x^{k+1})-A(x^{k})\|_{2}^{2}\\
&&\leq C_{1}(x^{k})-C_{1}(x^{k+1}).
\end{array}
\end{equation}
Combing the inequalities (\ref{equ38}) and (\ref{equ39}), we have
\begin{eqnarray*}
\sum_{k=1}^{N}\|x^{k+1}-x^{k}\|_{2}^{2}&\leq&\frac{1}{\theta}\sum_{k=1}^{N}\|x^{k+1}-x^{k}\|_{2}^{2}\\
&&-\frac{\mu}{\theta}\sum_{k=1}^{N}\|Ax^{k+1}-Ax^{k}\|_{2}^{2}\\
&\leq&\frac{\mu}{\theta}\sum_{k=1}^{N}\{C_{1}(x^{k})-C_{1}(x^{k+1})\}\\
&=&\frac{\mu}{\theta}(C_{1}(x^{1})-C_{1}(x^{N+1}))\\
&\leq&\frac{\mu}{\theta}C_{1}(x^{1})
\end{eqnarray*}
where the last inequality holds by the fact that the sequence $\{C_{1}(x^{k})\}$ is decreasing.
Thus, the series $\sum_{k=1}^{\infty}\|x^{k+1}-x^{k}\|_{2}^{2}$ is convergent, which implies that
$$\|x^{k+1}-x^{k}\|_{2}^{2}\rightarrow 0 \ \ \mathrm{as}\ \ k\rightarrow\infty.$$

(3) Let $\{x^{k_{l}}\}$ be a convergent nonnegative subsequence of $\{x^{k}\}$, and denote $x^{\ast}$ as the limit point of $\{x^{k_{l}}\}$, i.e.,
\begin{equation}\label{equ40}
x^{k_{l}}\rightarrow x^{\ast}\ \ \mathrm{as}\ \ k_{l}\rightarrow \infty.
\end{equation}
Since
$$\|x^{k_{l}+1}-x^{\ast}\|_{2}\leq\|x^{k_{l}+1}-x^{k_{l}}\|_{2}+\|x^{k_{l}}-x^{\ast}\|_{2}$$
and
$$\|x^{k_{l}+1}-x^{k_{l}}\|_{2}+\|x^{k_{l}}-x^{\ast}\|_{2}\rightarrow 0 \ \ \mathrm{as}\ \ k_{l}\rightarrow \infty,$$
we have
\begin{equation}\label{equ41}
x^{k_{l}+1}\rightarrow x^{\ast}\ \ \mathrm{as}\ \ k_{l}\rightarrow \infty.
\end{equation}

Moreover, by iteration (\ref{equ29}), it follows that
$$x^{k_{l}+1}=\mathcal{T}_{a,\lambda\mu}(\nabla_{+}(B_{\mu}(x^{k_{l}}))),$$
and combined with Corollary \ref{co5}, we have
\begin{eqnarray*}
&&\|x^{k_{l}+1}-\nabla_{+}(B_{\mu}(x^{k_{l}}))\|_{2}^{2}+\lambda\mu P_{a}(x^{k_{l}+1})\\
&&\leq\|x-\nabla_{+}(B_{\mu}(x^{k_{l}}))\|_{2}^{2}+\lambda\mu P_{a}(x).
\end{eqnarray*}
Taking the limit of $X^{k_{l}+1}$ and using the continuity of $P_{a}$ as well as (\ref{equ40}) and (\ref{equ41}), we can immediately get that
\begin{eqnarray*}
&&\|x^{\ast}-\nabla_{+}(B_{\mu}(x^{\ast}))\|_{2}^{2}+\lambda\mu P_{a}(x^{\ast})\\
&&\leq\|x-\nabla_{+}(B_{\mu}(x^{\ast}))\|_{F}^{2}+\lambda\mu P_{a}(x).
\end{eqnarray*}
for any $x\geq0$, which implies that $x^{\ast}$ minimizes the following function
\begin{equation}\label{equ42}
\|x-\nabla_{+}(B_{\mu}(x^{\ast}))\|_{2}^{2}+\lambda\mu P_{a}(x),
\end{equation}
and we can conclude that
$$x^{\ast}=\mathcal{T}_{a,\lambda\mu}(\nabla_{+}(B_{\mu}(x^{\ast}))).$$
\end{proof}

\section{Numerical experiments}\label{section5}
In this section, we carry out a series of simulations to demonstrate the performance of NIT algorithm. To show the success rate of NIT algorithm in
recovering a signal with the different cardinality for a given measurement matrix $A$, we consider a random matrix $A$ of size $100\times256$ with
entries independently drawn by random from a Gaussian distribution of zero mean and unit variance, $N(0,1)$. By randomly generating some sufficiently
sparse nonnegative vectors $x_{0}$, we generate vectors $b$, and we know the sparsest solution to $Ax_{0} = b$. The stopping criterion is usually as following
$$\frac{\|x^{k+1}-x^{k}\|_{2}}{\|x^{k}\|_{2}}\leq \mathrm{Tol}$$
where $x^{k+1}$ and $x^{k}$ are numerical results from two continuous iterative steps and $\mathrm{Tol}$ is a given small number.

The success is measured by computing the relative $\ell_{2}$-error value
$$\mathrm{RE}=\frac{\|x^{\ast}-x_{0}\|_{2}}{\|x_{0}\|_{2}}$$
to indicate a perfect recovery of the original sparse nonnegative vector $x_{0}$. In our experiments, we set to $\mathrm{Tol}=1e-8$, and $\mathrm{RE}=1e-4$.
For each experiment, we repeatedly perform 100 tests and present average results.

\begin{figure}[h!]
 \centering
 \includegraphics[width=2.4in]{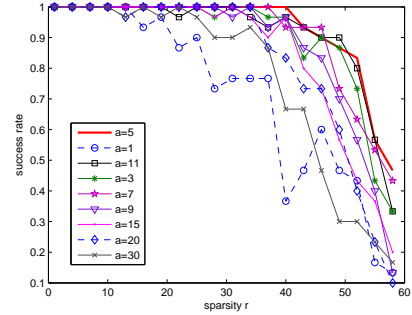}
\caption{The behaviors of the NIT algorithm for various values of $a>0$.}
\label{fig:2}
\end{figure}

\begin{figure}[h!]
 \centering
 \includegraphics[width=2.4in]{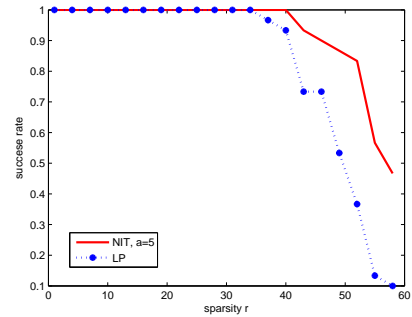}
\caption{The comparison of the NIT algorithm and linear programming (LP) in the recovery of sparse nonnegative signals.}
\label{fig:3}
\end{figure}

The graphs presented in Fig.\ref{fig:2} show the success rate of NIT algorithm in recovering the true (sparsest) solution with some different $a>0$, and $a=5$ seems to be the best strategy in our simulations. The graphs demonstrated in Fig.\ref{fig:3} show us that NIT algorithm can exactly recover the ideal signal until $r$ is around $40$, and linear programming (LP) is around $33$.

\begin{figure}[h!]
 \centering
 \includegraphics[width=2.4in]{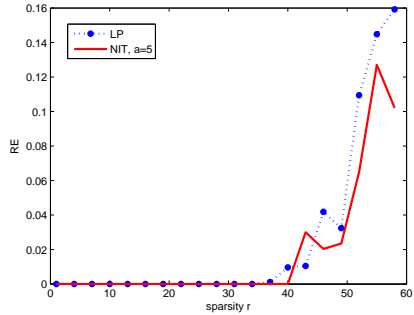}
\caption{The RE between the solution $x^{\ast}$ and the given signal $x_{0}$.}
\label{fig:4}
\end{figure}
From Fig.\ref{fig:4}, we can see that NIT algorithm always has the smallest relative $\ell_{2}$-error value, and as we can see, the NIT
algorithm ($a=5$) again has the best performance, with LP as the second.

\section{Conclusion}\label{section6}
In this paper, we replace the $\ell_{0}$-norm $\|x\|_{0}$ with a non-convex fraction function in the NP-hard problem $(P_{0}^{\geq})$, and translate this NP-hard problem into the problem $(FP_{a}^{\geq})$. We discussed the equivalence between $(FP_{a}^{\geq})$ and $(P_{0}^{\geq})$. Moreover, we also proved that the optimal solution of the problem $(FP_{a}^{\geq})$ could be approximately obtained by solving its regularization problem $(FP_{a,\lambda}^{\geq})$ for some proper smaller $\lambda>0$. The NIT algorithm is proposed to solve the regularization problem $(FP_{a,\lambda}^{\geq})$ for all $a>0$. Numerical experiments on sparse nonnegative
signal recovery problems show that our method performs effective in finding sparse nonnegative signals compared with the linear programming.


%

%
%

\section*{Acknowledgment}

The authors would like to thank editorial and referees for their comments which help us to enrich the content and improve the presentation of the results in this paper. The work was
supported by the National Natural Science Foundations of China (11771347, 11131006, 41390450, 11761003, 11271297) and the Science Foundations of Shaanxi Province of China (2016JQ1029, 2015JM1012).

\ifCLASSOPTIONcaptionsoff
  \newpage
\fi


\begin{thebibliography}{1}
%
%
%
\bibitem{brucksteindm3}
A. M. Bruckstein, D. L. Donoho, and M. Elad. From sparse solutions of systems of equations to sparse modelling of signals and images. SIAM Review, 51(1): 34-81 (2009).

\bibitem{elad4}
M. Elad. Sparse and Redundant Representations: from Theory to Applications in Signal and Image Processing. Springer, New York, 2010.

\bibitem{theodor5}
S. Theodoridis, Y. Kopsinis, and K. Slavakis. Sparsity-aware learning and compressed sensing: an overview. https://arxiv.org/pdf/1211.5231.

%
%

\bibitem{donoho9}
D. L. Donoho, J. Tanner. Sparse nonnegative solution of underdetermined linear equations by linear programming. Proceeding of the National
Academy of Sciences of the United States of America, 102(27): 9446-9451 (2005)

%
%
%
%
%
%
%
%
%
%
%
%
%
%
%
%
%
%
%
%
\bibitem{li30}
H. Li, Q. Zhang, A. Cui, and J. Peng. Minimization of fraction function penalty in compressed sensing. https://arxiv.org/pdf/1705.06048.
%
\bibitem{xing27}
F. Xing, Investigation on solutions of cubic equations with one unknown. Journal of the Central University for Nationalities (Natural Sciences Edition), 12(3): 207-218 (2003)
%
\bibitem{geman31}
D. Geman and G. Reynolds. Constrained restoration and recovery of discontinuities. IEEE Transaction on Pattern Analysis and Machine Intelligence, 14(3): 367-383 (1992)


\bibitem{bar33}
J. M. Bardsley, J. G. Nagy. Covariance-preconditioned iterative methods for nonnegativity constrainted astronomical imaging. SIAM Journal
on Matrix Analysis and Applications, 27(4): 1184-1197 (2006)

\bibitem{bru34}
A. M. Bruckstein, M. Elad, and M. Zibulevsky. On the uniqueness of nonnegative sparse solutions to underdetermined systems of equations.
IEEE Transactions on Information Theory, 54(11): 4813-4820 (2008)

\bibitem{dono36}
D. L. Donoho, J. Tanner. Counting the faces of randomly-projected hypercubes and orthants with applications. Discrete and Computational
Geometry, 43(3): 522-541 (2010)

\bibitem{kha37}
M. A. Khajehnejad, A. G. Dimakis, W. Xu, and B. Hassibi. Sparse recovery of nonnegative signals with minima expansion. IEEE Transaction on Signal Processing,
59(1): 196-208 (2011)

\bibitem{osg38}
P. D. O$^{'}$Grady, S. T. Rickard. Recovery of nonnegative signals from compressively samples observations via nonnegative quadratic programming. https://www.researchgate.net/publication/252320106.

\bibitem{moran39}
N. Vo, B. Moran, and S. Challa. Nonnegative-least-square classifier for face recognition. In Proceedings of the 6th International Symposium on Neural Networks:
Advances in Neural Networks, 5553: 449-456 (2009)

\bibitem{wang40}
M. Wang, W. Xu, and A. Tang. A unique nonnegative solution to an underdetermined system: from vectors to matrices. IEEE Transactions on Signal Processing, 59(3): 1007-1016 (2011)

\bibitem{bra41}
P.S. Bradley, O.L. mangasarian, and J.B. Rosen. Parsimonious least norm approximation. Computational Optimization and Applications, 11: 5-21 (1998)

\bibitem{bra42}
P. S. Bradley, U. M. Fayyad, and O. L. Mangasarian. Mathematical programming for data mining: formulations and challenges. INFORMS Journal on Computing, 11: 217-238 (1999)

\bibitem{he43}
R. He, W. Zheng, B. Hu, and X. Kong. Nonnegative sparse coding for discriminative semi-supervised learning. In Proceedings of IEEE Conference on Computer
Vision and Pattern Recognition (CVPR), 42(7): 2849-2856 (2011)

\bibitem{mang44}
O. L. Mangasarian. Machine learning via polydedral concave minimization. Applied Mathematics and Parallel ComputingIn, 175-188 (1996)

\bibitem{mang45}
O. L. Mangasarian. Minimum-support solutions of polyhedral concave programs. Optimization, 45: 149-162 (1999)

\bibitem{szlam46}
A. Szlam, Z. Guo, and S. Osher. A split Bregman method for nonnegative sparsity penalized least squares with applications to hyperspectral
demixing. In Proceedings of 2010 IEEE 17th International Conference on Image Processing, 1917-1920 (2010)

\bibitem{bruck47}
A. M. Bruckstein, M. Elad, and M. Zibulevsky. On the uniqueness of nonnegative sparse solutions to underdetermined systems of equations.
IEEE Transactions on Information Theory, 54(11): 4813-4820 (2008)

\bibitem{dono48}
D. L. Donoho, J. Tanner. Sparse nonnegative solutions of underdetermined linear equations by linear programming. Proceeding of the National
Academy of Sciences of the United States of America. 102(27): 9446-9451 (2005)

\bibitem{zhao49}
Y. Zhao. Equivalence and strong equivalence between the sparsest and least $\ell_{1}$-norm nonnegative solutions of linear systems and
their applications. Journal of the Operations Research Society of China, 2(2): 171-193 (2014)

\bibitem{zhang50}
Y. Zhang. A simple proof for recoverability of $\ell_{1}$-minimization (II): the nonnegative case. http://www.caam.rice.edu/ zhang/reports/tr0510.

\bibitem{kha51}
M. A. Khajehnejad, A. G. Dimakis, W. Xu, and B. Hassibi. Sparse recovery of nonnegative signals with minima expansion. IEEE Transactions on
Signal Processing, 59(1): 196-208 (2011)
%

\bibitem{chen54}
X. Chen, S. Xiang. Sparse solutions of linear complementarity problems. Mathematical Programming, 159: 539-556 (2016)

\bibitem{luen53}
D. G. Luenberger, Y. Ye. Linear and nonlinear programming, Fourth Edition, Springer, New York, 2008.

\bibitem{schw55}
G. Schwarz. Estimating the dimension of a model, Annals of Statistical, 6(2): 461-464 (1978)
\end{thebibliography}
\end{document}